%&latex
%\batchmode
%Start header
\documentclass{amsart}
\usepackage{amssymb}   % For Latex2e
\usepackage{amsmath}
\usepackage{amsthm}
\usepackage[all,dvips]{xy}
\usepackage[]{pstricks,pst-node,pst-plot}
\usepackage{graphicx}
\usepackage{pstcol,pst-char}
%\usepackage{amssymb, amscd}
%\makeindex
%\numberwithin{equation}{section}
%%%%%%%%%%%%%%%%%%%%%%%%%%%%%%%%%%%%%%%%%%%%%%%%%

\def\ZZ{{\mathbb Z}}
\def\Acal{{\mathcal A}}
\def\Ccal{{\mathcal C}}
\def\Ocal{{\mathcal O}}
\def\H{\rm{H}}

%%%%%%%%%%%%%%%%% environments %%%%%%%%%%%%%%%%%%
\newtheorem{theorem}{Theorem}[section]
\newtheorem{lemma}[theorem]{Lemma}
\newtheorem{proposition}[theorem]{Proposition}
\newtheorem{corollary}[theorem]{Corollary}

\newtheorem{definition-lemma}[theorem]{Definition-Lemma}

\theoremstyle{definition}
\newtheorem{definition}[theorem]{\bf Definition}

\theoremstyle{remark}
\newtheorem{remark}[theorem]{\bf Remark}

\def\vandaag{\number\day\space\ifcase\month\or
 januari\or februari\or  maart\or  april\or mei\or juni\or  juli\or
 augustus\or  september\or  oktober\or november\or  december\or\fi,
\number\year}
\def\today{\ifcase\month\or
 Jan\or Febr\or  Mar\or  Apr\or May\or Jun\or  Jul\or
 Aug\or  Sep\or  Oct\or Nov\or  Dec\or\fi
 \space\number\day, \number\year}

%%%%%%%%%%%%%%%%%%%%%%%%%%%%%%%%%%%%%%%%%%%%%%%%%

%downarrow with left index

%uparrow with left index

%%%%%%%%%%%%%%%%%%%%%%%%%%%%%%%%%%%%%%%%%%%%%%%%%%%
\begin{document}

\title[Invariants in positive characteristic]{Relations between some invariants of algebraic varieties in positive characteristic}

\author{Gerard van der Geer $\,$}
\address{Korteweg-de Vries Instituut, Universiteit van
Amsterdam, Postbus 94248, 1090 GE Amsterdam,  The Netherlands}
\email{G.B.M.vanderGeer@uva.nl}
\author{Toshiyuki Katsura}
\address{Department of Mathematics, Hosei University, 184-8584 Tokyo}
\email{toshiyuki.katsura.tk@hosei.ac.jp}
\subjclass{14G, 14G17, 11G25}
\begin{abstract} We discuss relations between certain invariants of varieties
in positive characteristic, like the $a$-number and the height of the
Artin-Mazur formal group. We calculate the $a$-number for Fermat surfaces.
\end{abstract}
\maketitle
%%\centerline{\tt \today}
\begin{section}{Introduction}\label{sec:intro}
Algebraic varieties in positive characteristic possess special invariants that
have no analogue in characteristic $0$. In this paper we consider
three such invariants related to the cohomology groups ${\H}^n$ with $n$
equal to the dimension of the variety. 
The first, the $a$-number that we introduced in \cite{GK3},
registers where the image of Frobenius acting on ${\H}^n(X,{\Ocal}_X)$
lands in the Hodge filtration of ${\H}^n_{\rm dR}(X)$. 
For abelian varieties it coincides with the
$a$-number defined by Oort. The second one is the height $h(X)$ of the
Artin-Mazur formal group, which is an infinitesimal invariant 
related to the \'etale cohomology group ${\H}^n_{\rm et}(X,{\bf G}_m)$.
The third, baptised the $b$-number $b(X)$, 
is related to ${\H}^n(X,B_i)$ with $B_i$ the sheaves of exact $1$-forms
defined by Illusie.

These invariants are related in subtle ways. In this note we prove the relation
$$
h(X)= b(X)+p_g(X)
$$
and in case $b(X)<\infty$ the basic estimate 
$$
b(X) \leq \dim {\H}^{n-1}(X,\Omega_X^1)\, .
$$
Furthermore we prove that if $b(X)<\infty$ then $a(X)=0$ or 
$a(X)=1$ and  we show that $a=0$ if and only if $b=0$.
As an example we calculate the $a$-number for Fermat surfaces.

Throughout this paper $k$ denotes an algebraically closed field of
characteristic $p>0$ and all varieties considered are complete and
non-singular.

\end{section}
%%%%%%%%%%%%%%
%%%%%%%%%%%%%%
\begin{section}{The $a$-number}
Let $X$ be a complete non-singular variety of dimension $n$ over $k$. 
The de Rham cohomology of $X$ is the hypercohomology of the complex 
$(\Omega_X^{\bullet}, d)$. We are interested in the $n$-th de Rham
cohomology group and its Hodge filtration
$$
{\H}^n_{\rm dR}(X)={\rm F}^0 \supset {\rm F}^1 \supset \cdots 
\supset {\rm F}^n \supset (0) \, .
$$
We shall assume in this section that the Hodge-to-de Rham spectral
sequence
$$
E^{i,j}_1= {\H}^j(X,\Omega_X^i) \Longrightarrow {\H}^{i+j}_{\rm dR}(X)
$$
degenerates at the $E_1$-level. 
(This condition is for example 
fulfilled if the characteristic $p$ satisfies $p>n$ and
if $X$ can be lifted to the Witt ring $W_2(k)$ of length $2$, 
cf.\ Deligne-Illusie \cite{DI}.) In this case 
the graded pieces are
$$
{\H}^n(X,{\Ocal}_X)\cong {\rm F}^0/{\rm F}^1, \, 
{\H}^{n-1}(X,\Omega^1_X)\cong {\rm F}^1/{\rm F}^2, \ldots, 
{\H}^0(X,\Omega^n_X)\cong {\rm F}^n\, .
$$
We have a Frobenius morphism $F$ acting on ${\H}^n_{\rm dR}(X)$; it acts
by $0$ on ${\rm F}^1$ and it induces a homomorphism
$$
F: {\H}^n(X,{\Ocal}_X)\cong {\rm F}^0/{\rm F}^1 
\longrightarrow {\H}^n_{\rm dR}(X)\, .
$$
By Katz \cite{Kz} this induced homomorphism is injective. 

In \cite{GK3} we defined an invariant called the $a$-number as follows.
\begin{definition}
The $a$-number of the variety $X$ is defined by 
$$
a(X)= \max \{ i: ({\rm Im}\, F)\cap
{\rm F}^i \neq (0) \}\, .
$$
\end{definition}
\noindent
Note that $0 \leq a(X) \leq n=\dim (X)$.
\smallskip

For an abelian variety $X$ Oort defined in \cite{O1} the $a$-number $a(X)$ by 
$$
a(X)= \dim_k{\rm Hom}(\alpha_p,X),
$$
with $\alpha_p$ the local-local group scheme (that is, the kernel of
Frobenius acting on the additive group ${\bf G}_a$). Since ${\rm End}(\alpha_p)
\cong k$ we can view ${\rm Hom}(\alpha_p,X)$ as a right vector space over $k$.
The union of all the images of the group scheme homomorphisms $\alpha_p
\to X$ is the maximal subgroup scheme $A(X)$ of $X[p]$ annihilated by
the operators $F$ (Frobenius) and $V$ (Verschiebung) on the kernel $X[p]$ of
multiplication by $p$. Then this $a$-number is $\log_p {\rm ord}\, A(X)$.
The Dieudonn\'e module of $X[p]$ can be identified with ${\H}^1_{\rm dR}(X)$
and the Dieudonn\'e module of $A(X)$ can be identified with the
kernel of $V$ acting on ${\H}^0(X,\Omega^1_X)$, the kernel of $F$.
Since $\ker V= {\rm Im}\, F$ Oort's $a$-number equals the dimension 
of the image of $F$ (acting on ${\H}^1_{\rm dR}(X)$) 
in ${\H}^0(X,\Omega_X^1)$:
$$
\dim_k{\rm Hom}(\alpha_p,X)= \dim {\H}^0(X,\Omega_X^1) \cap ({\rm Im}\, F).
$$
We showed in \cite{GK3} that for an abelian variety our definition of
the $a$-number that involves ${\H}^n_{\rm dR}(X)$ 
coincides with Oort's definition that involves ${\H}^1_{\rm dR}(X)$. 
For the reader's convenience we recall the proof.

\begin{proposition}
For an abelian variety $X/k$ the two definitions of the $a$-number coincide.
\end{proposition}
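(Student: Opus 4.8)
The plan is to reduce the comparison of the two $a$-numbers to a statement in linear algebra, using that for an abelian variety $X$ of dimension $n$ the de Rham cohomology ring ${\H}^\bullet_{\rm dR}(X)$ is the exterior algebra on ${\H}^1_{\rm dR}(X)$, compatibly with the Hodge filtration and with the Frobenius $F$. Write $H={\H}^1_{\rm dR}(X)$, $L={\rm F}^1H={\H}^0(X,\Omega^1_X)$ and $\bar H=H/L\cong {\H}^1(X,{\Ocal}_X)$, all three $n$-dimensional over $k$. Under ${\H}^n_{\rm dR}(X)\cong \wedge^n H$ the Hodge step ${\rm F}^i$ becomes the image of $\wedge^i L\otimes \wedge^{n-i}H$ in $\wedge^n H$, so that ${\rm gr}^i\cong \wedge^i L\otimes \wedge^{n-i}\bar H$. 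I would record these identifications first.

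Next I would treat the Frobenius in degrees $1$ and $n$ at once. On $H$ the map $F$ kills $L$ and, by Katz's theorem quoted above, induces an injection $\bar H\hookrightarrow H$; set $W:={\rm Im}(F|_H)\subseteq H$, a subspace of dimension $n$. By the discussion preceding the Proposition, Oort's $a$-number equals $\dim(W\cap L)$. In degree $n$ the Frobenius is $\wedge^n(F|_H)$, so its image is the line $\wedge^n W\subseteq \wedge^n H$. Hence the $a$-number of our definition is
$$
\max\{\, i : \wedge^n W\cap {\rm F}^i(\wedge^n H)\neq (0)\,\}=\max\{\, i : \wedge^n W\subseteq {\rm F}^i(\wedge^n H)\,\},
$$
the last equality because $\wedge^n W$ is one-dimensional.

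The heart of the matter is then the elementary claim that for an $n$-dimensional subspace $W\subseteq H$ one has $\wedge^n W\subseteq {\rm F}^j(\wedge^n H)$ if and only if $\dim(W\cap L)\ge j$. To prove it, put $m=\dim(W\cap L)$, choose a basis $w_1,\dots,w_m$ of $W\cap L$, and extend it to a basis $w_1,\dots,w_n$ of $W$. On one hand $w_1\wedge\cdots\wedge w_n$ lies in the image of $\wedge^m L\otimes \wedge^{n-m}H$, hence in ${\rm F}^m$, giving the inclusion for all $j\le m$. On the other hand its class in ${\rm gr}^m\cong \wedge^m L\otimes\wedge^{n-m}\bar H$ is $(w_1\wedge\cdots\wedge w_m)\otimes(\bar w_{m+1}\wedge\cdots\wedge\bar w_n)$, which is non-zero, because $w_1,\dots,w_m$ are independent in $L$ and $\bar w_{m+1},\dots,\bar w_n$ are independent in $\bar H$ (the choice of basis forces $W\cap L=\langle w_1,\dots,w_m\rangle$). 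Thus $\wedge^n W\not\subseteq {\rm F}^{m+1}$, so the largest $j$ with $\wedge^n W\subseteq {\rm F}^j$ is exactly $m$. Combining everything, the $a$-number of our definition equals $m=\dim(W\cap L)=\dim\big({\rm Im}\,F\cap {\H}^0(X,\Omega^1_X)\big)$, which is Oort's $a$-number.

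The only delicate point is the first step: one must know that ${\H}^n_{\rm dR}(X)\cong \wedge^n{\H}^1_{\rm dR}(X)$ is an isomorphism of filtered $F$-modules. That cup product respects the Hodge filtration is its multiplicativity, and that it commutes with $F$ is the fact that $F^*$ is a ring homomorphism; granted these, everything else is the filtered linear algebra above, which also re-proves $0\le a(X)\le n$ in this case.
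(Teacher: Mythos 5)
Your proof is correct and takes essentially the same route as the paper's: both identify the image of Frobenius on ${\H}^n_{\rm dR}(X)$ with the line $\wedge^n W$ for $W={\rm Im}(F)\subseteq {\H}^1_{\rm dR}(X)$ and then read off its position in the Hodge filtration from $\dim\bigl(W\cap {\H}^0(X,\Omega^1_X)\bigr)$. The paper carries out the filtration computation by choosing complementary subspaces (writing $W=A\oplus B$ with $A=W\cap{\H}^0(X,\Omega^1_X)$) where you use a basis and the graded pieces, but the content is identical, and your version merely spells out more explicitly why the class survives in ${\rm gr}^m$.
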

\begin{proof}
Recall that ${\H}_{\rm dR}^n(X) = \wedge^n {\H}^1_{\rm dR}(X)$ and
if we write ${\H}^1_{\rm dR}(X)=V_1\oplus V_2$ with $V_1={\H}^0(X, \Omega_X^1)$
and $V_2$ a complementary subspace, then
the Hodge filtration on ${\H}^n_{\rm dR}(X)$ is
${\rm F}^r= \sum_{j=r}^n \wedge^j V_1 \otimes \wedge^{n-j} V_2$.
We have  $F({\H}^n(X,{\Ocal}_X))= F(\wedge^n {\H}^1(X,{\Ocal}_X))=
\wedge^n F({\H}^1(X,{\Ocal}_X))$. If we write $F({\H}^1(X,{\Ocal}_X))= 
A\oplus B$
with $A$ the intersection with ${\H}^0(X,\Omega_X^1)$ 
and $B$ a complementary space, then
$\wedge^n (A\oplus B)= \wedge^a A \otimes \wedge^{n-a} B$ with $a=\dim (A)$.
It follows that $F({\H}^n(X,{\Ocal}_X))$ lies in
${\rm F}^a$, but not in ${\rm F}^{a+1}$.
\end{proof}

The $a$-number is just one of the invariants that can be associated to
the relative position of the Hodge filtration and the conjugate
filtration on ${\H}^n_{\rm dR}(X)$, cf.\ \cite{GK3}, \cite{MW}.
\end{section}
%%%%%%%%%%%%%%
\begin{section}{The $h$-number}
For a non-singular proper variety $X/k$ of dimension $n$ 
one can consider the formal
completion of the Picard group. For any local artinian scheme $S$ 
with residue field $k$ its $S$-valued points are given by
the exact sequence
$$
0 \to \widehat{\rm Pic}(X)(S) \to {\H}^1_{\rm et}(X\times S, {\bf G}_m) \to
{\H}^1_{\rm et}(X,{\bf G}_m),
$$
where ${\bf G}_m$ denotes the multiplicative group. This invariant provides
interesting information, for example for elliptic curves in positive characteristic. This idea was generalized by
Artin and Mazur to the higher cohomology groups in \cite{AM}. 
Let ${\Ccal}$ the category of Artinian local algebras $(R,m)$ over $k$
with maximal ideal $m$  such that $R/m \cong k$, and denote by
 ${\Acal}$ the category of abelian groups. We consider
the covariant functor $\Phi_{X}=\Phi_X^{(n)} : {\Ccal} \longrightarrow {\Acal}$
defined by
$$
\Phi_{X}(R) = \ker({\H}_{\rm et}^{n}(X\times_{k}{\rm Spec}~R,{\bf G}_{m})
\longrightarrow {\H}_{\rm et}^{n}(X, {\bf G}_{m}))
$$
for $R$ an object in ${\Ccal}$. Here ${\H}_{et}^{n}(X, {\bf G}_{m})$ denotes
the $n$-th \'etale cohomology group with values in the multiplicative group
${\bf G}_{m}$, and $X \longrightarrow X\times_{k}{\rm Spec}~{R}$
is the natural immersion. When the functor $\Phi_X$ is pro-representable
by a formal Lie group,
we call the formal Lie group an Artin-Mazur formal group.
We also denote by $\Phi_X$ the formal Lie group.
The tangent space of $\Phi_X$ is given by ${\H}^{n}(X, {\Ocal}_{X})$
(cf.\ Artin-Mazur ~\cite{AM}). If $\Phi_X$ is pro-representable
by a formal Lie group we denote by $h(X)$
the {\sl height} of the corresponding formal Lie group $\Phi_X$ 
and call $h(X)$ the {\sl $h$-number} of $X$.

This $h$-number is a special invariant in positive characteristic and
$h(X)$ is either a positive integer or $\infty$. For example, in the case of
an elliptic curve the $h$-number assumes the values $1$ or $2$ depending
on whether the elliptic curve is ordinary or supersingular;
for a K3 surface $X$ we know that either $1\leq h(X)\leq 10$ or $h(X)=\infty$,
the latter if $\Phi_X$ is the additive group ${\bf G}_a$,
cf. Artin~\cite{A,AM}.

\end{section}
%%%%%%%%%%%%%%
\begin{section}{The $b$-number}\label{sec:b-number}
Let $X$ be a complete non-singular variety over $k$. 
Following Illusie (\cite{I}) we define $B_1=d{\Ocal}_X$ and 
$Z^1=\ker (\Omega^1_X {\buildrel d \over \longrightarrow} \Omega^2_X)$. 
Using the Cartier operator 
$$
C: Z^1 \to \Omega_X^1
$$
we can define inductively sheaves of $O_X$-modules for $j\geq 2$ by
$$
\begin{aligned}
Z^j=& C^{-1}(Z^{j-1}), \cr
B_j=& C^{-1}(B_{j-1}).
\end{aligned}
$$
Since $B_1 \subseteq Z^1$ we get a filtration
$$
0 = B_{0} \subset B_{1}\subset B_{2} \subset \cdots \subset B_{i}\subset \cdots
\subset Z^{i}\subset \cdots \subset Z^{1} \subset Z^{0} = \Omega_{X}^{1} \, .
$$
Recall that the sheaf $B_i$ admits a description in terms of 
Witt vector cohomology by the Serre differential 
$D_i: W_i(\Ocal_X) \to B_i$ given by
$$
(a_0,\ldots,a_{i-1}) \mapsto  a_{0}^{p^{i-1}-1}da_{0} +a_{1}^{p^{i-2}-1}da_{1} + \cdots + 
a_{i-2}^{p-1}da_{i-2} + da_{i-1}.
$$
This map $D_i$ induces an isomorphism $$
W_{i}({\Ocal}_{X})/FW_{i}({\Ocal}_{X})\cong B_{i},
$$
where $F$ is the Frobenius operator, cf.\ Serre \cite{S}.
The cohomology of these sheaves $B_i$ and $Z^i$ leads to interesting invariants.
One example is:
\begin{definition}
The $b$-number of $X$ is $b(X)= \max_{i\geq 1} \dim {\H}^n(X,B_i)$\, .
\end{definition}
By the exact sequence
$$
       0 \rightarrow  B_{1} \longrightarrow B_{i+1}
           {\buildrel C \over \longrightarrow} B_{i}\rightarrow 0
$$
we have a surjective homomorphism
$$
        C : {\H}^{n}(X, B_{i+1})\longrightarrow {\H}^{n}(X, B_{i}),
$$
which gives a projective system $\{C, {\H}^{n}(X, B_{i})\}$
and we may reformulate the definition of $b(X)$ as
$$
b(X) =  \dim \lim_{\leftarrow} {\H}^{n}(X, B_{i}).
$$

The $b$-number can be related to the action of Frobenius on Witt vector
cohomology as follows.
\begin{proposition}\label{b-Witt}
We have  $b(X) = \dim_k {\H}^{n}(X, W({\Ocal}_{X}))/F{\H}^{n}(X, W({\Ocal}_{X}))$.
\end{proposition}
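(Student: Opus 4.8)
The plan is to derive the formula from Serre's isomorphism $W_i(\Ocal_X)/FW_i(\Ocal_X)\cong B_i$ by passing to the projective limit over $i$. Two soft facts make this routine: since $\dim X=n$, one has ${\H}^{n+1}(X,-)=0$ by Grothendieck's vanishing theorem; and all the cohomology groups below are finite-dimensional $k$-vector spaces, so every inverse system that occurs is Mittag--Leffler and the relevant $\lim_{\leftarrow}^{1}$-terms vanish.

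\textbf{Finite level.} As $X$ is non-singular, hence reduced, the Frobenius $F\colon W_i(\Ocal_X)\to W_i(\Ocal_X)$ --- the $p$-th power map on each Witt component --- is injective, and by Serre's theorem the Serre differential $D_i$ identifies its cokernel with $B_i$. Hence for every $i\geq 1$ there is a short exact sequence of sheaves of abelian groups
$$0\longrightarrow W_i(\Ocal_X)\buildrel F \over \longrightarrow W_i(\Ocal_X)\buildrel D_i \over \longrightarrow B_i\longrightarrow 0 .$$
Since ${\H}^{n+1}(X,W_i(\Ocal_X))=0$, the long exact cohomology sequence ends with
$${\H}^n(X,W_i(\Ocal_X))\buildrel F \over \longrightarrow {\H}^n(X,W_i(\Ocal_X))\longrightarrow {\H}^n(X,B_i)\longrightarrow 0 ,$$
so that ${\H}^n(X,B_i)\cong {\rm coker}\bigl(F\colon {\H}^n(X,W_i(\Ocal_X))\to {\H}^n(X,W_i(\Ocal_X))\bigr)$.

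\textbf{Compatibility and the limit.} On Witt vectors $F$ commutes with the restriction map $R\colon W_{i+1}(\Ocal_X)\to W_i(\Ocal_X)$, and a short computation with the explicit formula for $D_i$ shows $C\circ D_{i+1}=D_i\circ R$ (using $C(a^{p-1}da)=da$, $C(da)=0$ and $C(a^p\omega)=aC(\omega)$). Thus the sequences above are compatible in $i$, with transition maps $R$ on the two copies of ${\H}^n(X,W_i(\Ocal_X))$ and the Cartier operator $C$ on ${\H}^n(X,B_i)$. Each ${\H}^q(X,W_i(\Ocal_X))$ is finite-dimensional because $W_i(\Ocal_X)$ is an iterated extension of copies of the coherent sheaf $\Ocal_X$ and $X$ is complete, and each ${\H}^q(X,B_i)$ is finite-dimensional because $B_i$ is coherent for the $\Ocal_X$-module structure obtained through a power of the finite morphism Frobenius (cf.\ Illusie \cite{I}); hence all the inverse systems in sight are Mittag--Leffler, so forming $\lim_{\leftarrow i}$ is exact on them and in particular commutes with the cokernel of $F$. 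Combined with the standard identification ${\H}^n(X,W(\Ocal_X))=\lim_{\leftarrow i}{\H}^n(X,W_i(\Ocal_X))$ (Serre \cite{S}), compatibly with the action of $F$, this yields
$$\lim_{\leftarrow i}{\H}^n(X,B_i)\ \cong\ {\rm coker}\bigl(F\colon {\H}^n(X,W(\Ocal_X))\to {\H}^n(X,W(\Ocal_X))\bigr) .$$
Since $b(X)=\dim\lim_{\leftarrow i}{\H}^n(X,B_i)$ by the reformulation given just above, this is precisely the claimed equality.

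\textbf{Main obstacle.} Everything is formal once the vanishing ${\H}^{n+1}(X,-)=0$ and the finite-dimensionality of the cohomology groups are in hand; these rest respectively on $\dim X=n$ (Grothendieck vanishing) together with the reducedness of $X$, and on the coherence of $B_i$ after a Frobenius twist. The finite-dimensionality of ${\H}^\bullet(X,B_i)$ is the one input I would take care to pin down, since it is what legitimises interchanging $\lim_{\leftarrow}$ with cokernels and with the computation of ${\H}^n$; the identity $C\circ D_{i+1}=D_i\circ R$ is a routine unwinding of the definition of the Serre differential, and the commutation of $\lim_{\leftarrow}$ with cokernels then follows automatically from the Mittag--Leffler property.
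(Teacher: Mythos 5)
Your proof is correct and follows essentially the same route as the paper: both reduce to the short exact sequence $0\to W_i(\Ocal_X)\buildrel F\over\to W_i(\Ocal_X)\buildrel D_i\over\to B_i\to 0$, pass to cohomology to get right-exact sequences at each finite level, check compatibility with $R$ and $C$, and use the Mittag--Leffler condition to take the projective limit and identify $\lim_{\leftarrow}{\H}^n(X,B_i)$ with the cokernel of $F$ on ${\H}^n(X,W(\Ocal_X))$. You merely spell out in more detail the inputs (Grothendieck vanishing, finite-dimensionality, the identity $C\circ D_{i+1}=D_i\circ R$) that the paper leaves implicit.
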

\noindent
Note that ${\H}^{n}(X, W({\Ocal}_{X}))/F{\H}^{n}(X, W({\Ocal}_{X}))$ is
a vector space over $W(k)/pW(k)\cong k$.
\begin{proof}
We have the commutative diagram
\begin{displaymath}
\begin{xy}
\xymatrix{
W_{i+1} \ar[r]^{D_{i+1}} \ar[d]^{R} & B_{i+1} \ar[d]^C \\
W_i \ar[r]^{D_i} & B_i \\
}
\end{xy}
\end{displaymath}
and a map of exact sequences
\begin{displaymath}
\begin{xy}
\xymatrix{
 &{\H}^n(X,W_{i+1}(\Ocal_X)) \ar[r]^F \ar[d]^R & 
{\H}^n(X,W_{i+1}(\Ocal_X)) \ar[r]  \ar[d]^R & {\H}^n(X,B_{i+1}) \ar[r] 
\ar[d]^C & 0 \\
 &{\H}^n(X,W_{i}(\Ocal_X)) \ar[r]^F &{\H}^n(X,W_{i}(\Ocal_X)) \ar[r] &
{\H}^n(X,B_{i}) \ar[r] & 0 \\
}
\end{xy}
\end{displaymath}
The projective system $\{R:{\H}^{n}(X, W_{i+1}({\Ocal}_{X}))\rightarrow 
{\H}^{n}(X, W_{i}({\Ocal}_{X}))\}$
satisfies the Mittag-Leffler condition, and by taking the projective limit 
we obtain an exact sequence
$$
{\H}^{n}(X, W({\Ocal}_{X})) 
\stackrel{F}{\longrightarrow} {\H}^{n}(X, W({\Ocal}_{X})) 
 \longrightarrow   \lim_{\leftarrow} {\H}^{n}(X, B_{i})  \rightarrow 0,
$$
and this gives us the desired conclusion.
\end{proof}
Another characterization of the $b$-number uses the exact sequence
$$
0 \to W_i(\Ocal_X) {\buildrel F \over \longrightarrow} W_i(\Ocal_X) 
{\buildrel D_i \over \longrightarrow} B_i \to 0
$$
and the induced long exact cohomology sequence.
\begin{proposition}\label{b-alternative}
We have 
$b(X)= \max_{i\geq 1} \dim {\H}^{n-1}(X,B_i)/{\rm Im}\, D_i $.
\begin{proof}
From the long exact cohomology sequence we get the exact sequence
$$
\begin{aligned}
0 \to {\H}^{n-1}(X,B_i)/{\rm Im}\,  D_i \to {\H}^n(X,W_i(\Ocal_X)) 
{\buildrel F \over \longrightarrow} {\H}^n(X,W_i(\Ocal_X)) & \\
{\buildrel D_i \over \longrightarrow} {\H}^n(X,B_i) \to 0 & . \\
\end{aligned}
$$
Looking at the lenghts of these $W_{i}(k)$-modules,
we observe
$$
\dim {\H}^{n -1}(X, B_{i}) /{\rm Im}\,  D_{i} = \dim {\H}^{n}(X, B_{i}).
$$
\end{proof}
\end{proposition}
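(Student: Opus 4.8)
The plan is to read the statement off the short exact sequence of sheaves
$$
0 \to W_i(\Ocal_X) {\buildrel F \over \longrightarrow} W_i(\Ocal_X) {\buildrel D_i \over \longrightarrow} B_i \to 0
$$
recorded just above, in the same spirit as Proposition \ref{b-Witt} was extracted from its long exact sequences. First I would pass to the long exact cohomology sequence of this sequence and use that $X$ has dimension $n$, so that ${\H}^{n+1}(X, W_i(\Ocal_X)) = 0$; its top end then collapses to the four-term exact sequence
$$
\begin{aligned}
0 \to {\H}^{n-1}(X,B_i)/{\rm Im}\, D_i {\buildrel \partial \over \longrightarrow} {\H}^{n}(X,W_i(\Ocal_X)) & \\
{\buildrel F \over \longrightarrow} {\H}^{n}(X,W_i(\Ocal_X)) {\buildrel D_i \over \longrightarrow} {\H}^{n}(X,B_i) \to 0 & ,
\end{aligned}
$$
in which the leftmost term is the cokernel of $D_i$ acting on ${\H}^{n-1}$ and $\partial$ is the connecting map, whose image is $\ker F$ by exactness at the first copy of ${\H}^{n}(X,W_i(\Ocal_X))$.

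Next I would count lengths along this sequence. Since $X$ is complete, each ${\H}^j(X, W_i(\Ocal_X))$ is a module of finite length over the Artinian local ring $W_i(k)$, and the two middle terms, being the same module, have equal length; the two outer terms are cohomology groups of the $\Ocal_X$-module $B_i$, hence finite-dimensional $k$-vector spaces, and for such a space $V$ one has $\ell_{W_i(k)}(V) = \dim_k V$ because $k = W_i(k)/pW_i(k)$ is the residue field. The operator $F$ is only Frobenius-semilinear, but $k$ is perfect so the twist is by an automorphism of $W_i(k)$; the kernel and image of a semilinear endomorphism of a finite-length $W_i(k)$-module are again submodules, and a semilinear bijection carries composition series to composition series and hence preserves length. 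Therefore additivity of length applies to the four-term sequence and gives $\dim {\H}^{n-1}(X,B_i)/{\rm Im}\, D_i = \dim {\H}^{n}(X,B_i)$ for every $i \geq 1$. Taking the maximum over $i$ and recalling the definition $b(X) = \max_{i\geq 1} \dim {\H}^n(X,B_i)$ then yields the proposition.

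The argument is essentially formal once the four-term sequence is in hand, so there is no deep obstacle; the one step deserving care is the length count, where one is juggling a mixture of $k$-vector spaces and $W_i(k)$-modules and must make sure that the single invariant $\ell_{W_i(k)}(-)$ stays additive across the Frobenius-semilinear map $F$. A reader who would rather sidestep semilinear bookkeeping can instead split the four-term sequence into the two short exact sequences meeting along ${\rm Im}(F) = \ker(D_i) \subseteq {\H}^n(X,W_i(\Ocal_X))$ and apply the usual additivity of length twice, using only that ${\rm Im}(F)$ is a $W_i(k)$-submodule.
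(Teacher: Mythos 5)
Your proposal is correct and follows essentially the same route as the paper: extract the four-term exact sequence from the long exact cohomology sequence of $0 \to W_i(\Ocal_X) \to W_i(\Ocal_X) \to B_i \to 0$ and compare lengths of $W_i(k)$-modules to conclude $\dim {\H}^{n-1}(X,B_i)/{\rm Im}\, D_i = \dim {\H}^{n}(X,B_i)$. You simply spell out details the paper leaves implicit (the vanishing of ${\H}^{n+1}$ and the care needed with the Frobenius-semilinear map in the length count).
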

\end{section}
%%%%%%%%%%%%%%%%%%%%%%%%%%
\begin{section}{An inequality for the $b$-number}
We now will prove a basic inequality for the $b$-number. 

\begin{theorem}\label{thm:b-estimate}
If $b(X)< \infty$ then $b(X) \leq \dim {\H}^{n-1}(X, \Omega^1_X)$.
\end{theorem}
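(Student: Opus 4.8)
The plan is to reduce the inequality, via the Cartier operator, to a vanishing statement for the cohomology of the sheaves $B_{i}$ and $Z^{i}$. Since the Cartier maps $C\colon {\H}^{n}(X,B_{i+1})\to {\H}^{n}(X,B_{i})$ are surjective, the numbers $\dim {\H}^{n}(X,B_{i})$ are non-decreasing in $i$, so the hypothesis $b(X)<\infty$ forces them to stabilize: there is an $i_{0}$ with $\dim {\H}^{n}(X,B_{i})=b(X)$ and with $C\colon {\H}^{n}(X,B_{i+1})\to {\H}^{n}(X,B_{i})$ an isomorphism for all $i\ge i_{0}$ (and likewise $\dim {\H}^{n-1}(X,B_{i})/{\rm Im}\,D_{i}=b(X)$ for $i\ge i_{0}$ by Proposition~\ref{b-alternative}). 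I would then use the exact sequence produced by the iterated Cartier operator: since $C^{i}$ sends $Z^{i}$ onto $\Omega_{X}^{1}$ with kernel $B_{i}$ (i.e.\ $Z^{i}/B_{i}\cong\Omega_{X}^{1}$, obtained by iterating the Cartier isomorphism $Z^{1}/B_{1}\cong\Omega_{X}^{1}$), there is a short exact sequence $0\to B_{i}\to Z^{i}\xrightarrow{C^{i}}\Omega_{X}^{1}\to 0$, whose cohomology sequence reads
$$
{\H}^{n-1}(X,\Omega_{X}^{1})\xrightarrow{\ \delta_{i}\ }{\H}^{n}(X,B_{i})\xrightarrow{\ \iota_{i}\ }{\H}^{n}(X,Z^{i})\xrightarrow{\ C^{i}\ }{\H}^{n}(X,\Omega_{X}^{1})\to 0 .
$$
Hence $b(X)=\dim {\H}^{n}(X,B_{i})=\dim {\rm Im}\,\delta_{i}+\dim {\rm Im}\,\iota_{i}$ for $i\ge i_{0}$, and since $\dim {\rm Im}\,\delta_{i}\le\dim {\H}^{n-1}(X,\Omega_{X}^{1})$ the theorem follows as soon as one proves that $\iota_{i}=0$ for $i\gg 0$, equivalently that $C^{i}\colon {\H}^{n}(X,Z^{i})\to {\H}^{n}(X,\Omega_{X}^{1})$ is injective for $i$ large.

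To establish this vanishing I would again exploit the stabilization. The Cartier operator gives a commutative square relating $\iota_{i+1}$ to $\iota_{i}$; as $C$ is an isomorphism on ${\H}^{n}(X,B_{i})$ for $i\ge i_{0}$, this yields ${\rm Im}\,\iota_{i}=C({\rm Im}\,\iota_{i+1})$, so the subspaces ${\rm Im}\,\iota_{i}\subseteq {\H}^{n}(X,Z^{i})$ have non-decreasing dimension; being quotients of ${\H}^{n}(X,B_{i})$ they are bounded by $b(X)$, hence they stabilize and the transition maps $C$ between them become isomorphisms. All the sheaves in sight are coherent over ${\Ocal}_{X}^{p^{i}}$, so every cohomology group occurring is finite-dimensional over $k$ and the projective systems satisfy the Mittag-Leffler condition; passing to the limit over $C$ in the sequences above yields an exact sequence
$$
{\H}^{n-1}(X,\Omega_{X}^{1})\longrightarrow \varprojlim_{C}{\H}^{n}(X,B_{i})\longrightarrow \varprojlim_{C}{\H}^{n}(X,Z^{i})\longrightarrow {\H}^{n}(X,\Omega_{X}^{1})\to 0
$$
with middle term of dimension $b(X)$, and one is left to show that the second map is zero.

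This last vanishing is the main obstacle, and it is exactly here that finiteness of $b(X)$ is genuinely used: if $b(X)=\infty$ (for instance when the Artin-Mazur formal group is ${\bf G}_{a}$) the asserted bound fails, so the argument cannot be purely formal. I would expect the proof to combine the stabilization of the ${\H}^{n}(X,B_{i})$ with a hands-on analysis of the Cartier operator on the tower $\{Z^{i}\}$ --- or, through Proposition~\ref{b-Witt}, with the structure of ${\H}^{n}(X,W({\Ocal}_{X}))$ as a module over $F$ when $F$ has finite cokernel --- to show that no ``extra'' class in ${\H}^{n}(X,Z^{i})$ survives in the Cartier-limit beyond the ones pulled back from ${\H}^{n}(X,\Omega_{X}^{1})$. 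As a sanity check, for curves one has $Z^{i}=\Omega_{X}^{1}$ for all $i$ and the Cartier operator is bijective on ${\H}^{n}(X,\Omega_{X}^{n})$, so $\iota_{i}=0$ is immediate; the real work is to push this through in higher dimension.
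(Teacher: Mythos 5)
Your setup is sound as far as it goes: the exact sequence $0\to B_i\to Z^i\xrightarrow{C^i}\Omega^1_X\to 0$ is correct, the monotonicity and stabilization of $\dim {\H}^n(X,B_i)$ under the hypothesis $b(X)<\infty$ is correct, and the identity $b(X)=\dim{\rm Im}\,\delta_i+\dim{\rm Im}\,\iota_i$ is a valid reformulation. But the proof stops exactly where the theorem begins. The assertion that $\iota_i=0$ for $i\gg 0$ (equivalently, that $\delta_i:{\H}^{n-1}(X,\Omega^1_X)\to{\H}^n(X,B_i)$ is eventually surjective) is, given your sequence, \emph{equivalent} to the inequality to be proved, and nothing in your stabilization argument delivers it: stabilization of $\dim{\H}^n(X,B_i)$ and of the subspaces ${\rm Im}\,\iota_i$ is perfectly consistent with $\iota_i\neq 0$ for all $i$, and passing to the Mittag--Leffler limit only repackages the same unknown. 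You acknowledge this yourself (``this last vanishing is the main obstacle \dots I would expect the proof to combine \dots''), so what you have is a correct reduction, not a proof.

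For comparison, the paper works one cohomological degree down. Via the sequence $0\to W_i(\Ocal_X)\xrightarrow{F}W_i(\Ocal_X)\xrightarrow{D_i}B_i\to 0$ one gets $\dim{\H}^n(X,B_i)=\dim{\H}^{n-1}(X,B_i)/{\rm Im}\,D_i$ (Proposition~\ref{b-alternative}), and the whole content is then Lemma~\ref{b-lemma}: if $b(X)<\infty$ then $\ker\varphi_\ell\subset{\rm Im}\,D_\ell$, where $\varphi_\ell:{\H}^{n-1}(X,B_\ell)\to{\H}^{n-1}(X,\Omega^1_X)$ is induced by inclusion; the inequality follows immediately since ${\H}^{n-1}(X,B_i)/{\rm Im}\,D_i$ is then a quotient of ${\H}^{n-1}(X,B_i)/\ker\varphi_i\hookrightarrow{\H}^{n-1}(X,\Omega^1_X)$. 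The lemma is where finiteness is genuinely spent: a hypothetical class $\alpha\in\ker\varphi_\ell\setminus{\rm Im}\,D_\ell$, precisely because it dies in ${\H}^{n-1}(X,\Omega^1_X)$, can be lifted through the Cartier operator by an explicit \v{C}ech construction (adjoining one more function $f^{(\ell)}_I$ to the Serre-type expression each time), and the resulting classes $\alpha, C^{-1}\alpha,\dots,C^{-t}\alpha$ are shown to be linearly independent modulo ${\rm Im}\,D_{\ell+t}$ for every $t$, contradicting $b(X)<\infty$. To complete your route you would need an argument of the same concrete kind showing that a class in ${\H}^n(X,B_i)$ outside ${\rm Im}\,\delta_i$ propagates to unboundedly many independent classes; no purely formal limit argument will produce the vanishing of $\iota_i$.
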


The natural inclusion $B_{\ell} \to \Omega^1_X$ induces a homomorphism
$$
\varphi_{\ell}: {\H}^{n-1}(X,B_{\ell})\longrightarrow {\H}^{n-1}(X,\Omega^1_X).
$$
The proof of this theorem relies on the following lemma relating the
kernel of $\varphi_{\ell}$ and the image of $D_{\ell}$.

\begin{lemma}\label{b-lemma}
If $b(X) < \infty$ then $\ker \varphi_{\ell} \subset {\rm Im}\, D_{\ell}$
for any positive integer~$\ell$.
\end{lemma}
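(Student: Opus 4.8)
The plan is to run a Mittag-Leffler / projective-limit argument to convert the hypothesis $b(X)<\infty$ into a stabilization statement, and then to play the Cartier operator off against the natural inclusions $B_\ell \hookrightarrow \Omega^1_X$. Fix a class $\xi \in \ker\varphi_\ell \subset \H^{n-1}(X,B_\ell)$. First I would recall that the short exact sequences $0 \to B_1 \to B_{i+1} \xrightarrow{C} B_i \to 0$ give, on $\H^{n-1}$, a projective system with transition maps $C$, and that (as in the proof of Proposition~\ref{b-Witt}) the analogous system on $\H^n$ has surjective transition maps, so $b(X) = \dim \lim_\leftarrow \H^n(X,B_i)$. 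The finiteness of $b(X)$ forces this tower on $\H^n$ to stabilize: there is an $i_0$ so that $C:\H^n(X,B_{i+1}) \to \H^n(X,B_i)$ is an isomorphism for all $i\ge i_0$. Dually, using Proposition~\ref{b-alternative}, the same finiteness controls the cokernels of $D_i$ on $\H^{n-1}$.

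Next I would relate $\varphi_\ell$ to the Cartier operator. The key commutative square is
\[
\begin{CD}
\H^{n-1}(X,B_{\ell+1}) @>{\varphi_{\ell+1}}>> \H^{n-1}(X,\Omega^1_X) \\
@V{C}VV @VV{C}V \\
\H^{n-1}(X,B_{\ell}) @>{\varphi_{\ell}}>> \H^{n-1}(X,Z^1),
\end{CD}
\]
coming from the compatibility of the inclusions $B_{j}\hookrightarrow \Omega^1_X$ with $C$ (note $C(B_{\ell+1})=B_\ell$ and $C$ is defined on $Z^1\supset \Omega^1_X$ only after identifying via Frobenius — care is needed here with the $p$-linearity of $C$). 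Since the transition maps on the $\H^n$-tower are eventually isomorphisms, a diagram chase with the six-term sequences relating $\H^{n-1}(X,B_i)$, $\H^n(X,W_i(\Ocal_X))$ and $\H^n(X,B_i)$ shows that $\ker\varphi_\ell$ is, up to the stabilized part, carried isomorphically by $C$ into $\ker\varphi_{\ell-1}$, so one can propagate $\xi$ downward (or upward) through the tower. Using $\H^{n-1}(X,B_1)/\operatorname{Im}D_1 \cong \H^n(X,B_1)$ from Proposition~\ref{b-alternative} as the base case, I would then lift $\xi$ step by step: at each stage the class lies in the kernel of the map to $\H^{n-1}(X,\Omega^1_X)$, hence by the long exact sequence of $0 \to W_\ell(\Ocal_X) \xrightarrow{F} W_\ell(\Ocal_X) \xrightarrow{D_\ell} B_\ell \to 0$ it comes from $\H^n(X,W_\ell(\Ocal_X))$ — i.e.\ it is in $\operatorname{Im}D_\ell$ — provided the obstruction in $\H^{n-1}(X,\Omega^1_X/B_\ell)$ vanishes, which is exactly what $\xi\in\ker\varphi_\ell$ guarantees once we know the tower has stabilized so no higher Cartier-iterate obstruction survives.

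The main obstacle, I expect, is precisely this last point: controlling the interaction between $\ker\varphi_\ell$ and $\operatorname{Im}D_\ell$ requires knowing that the relevant piece of the inverse system $\{\H^{n-1}(X,B_i), C\}$ has no "transcendental tail" — this is where $b(X)<\infty$ must be used in an essential way, not merely formally. Concretely, I would argue: if $\xi\in\ker\varphi_\ell$ but $\xi\notin\operatorname{Im}D_\ell$, then $\xi$ maps to a nonzero element of $\H^n(X,B_\ell)$ under the connecting map $D_\ell$, and by the stabilization of the $\H^n$-tower this nonzero element persists under all $C$-iterates, producing via Proposition~\ref{b-alternative} a nonzero contribution to $\H^{n-1}(X,B_i)/\operatorname{Im}D_i$ for all large $i$; chasing this back up against the commuting square above and the injectivity of the stabilized $C$ on the $\H^n$-side yields a nonzero class in $\H^{n-1}(X,\Omega^1_X)$ in the image of $\varphi_i$ but originating from $\xi$ — contradicting $\varphi_\ell(\xi)=0$ together with the compatibility of the $\varphi_i$ under $C$. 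Making this contradiction precise — in particular pinning down exactly which finite-length identity from Proposition~\ref{b-alternative} to invoke and verifying the $p$-linear bookkeeping for $C$ on $\Omega^1_X$ versus $Z^1$ — is the technical heart of the argument; once it is in place, Theorem~\ref{thm:b-estimate} follows immediately by composing $\H^n(X,B_\ell) \twoheadleftarrow \H^{n-1}(X,B_\ell)/\operatorname{Im}D_\ell \hookrightarrow \H^{n-1}(X,\Omega^1_X)$ via $\varphi_\ell$ and taking $\ell$ large.
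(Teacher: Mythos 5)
There is a genuine gap: your sketch never isolates the two steps that actually carry the proof, and the scaffolding you propose in their place does not hold up. First, the commutative square you write down is ill-posed: the Cartier operator is defined on $Z^1$, not on $\Omega^1_X$, so there is no induced map $C$ on ${\H}^{n-1}(X,\Omega^1_X)$ and hence no ``compatibility of the $\varphi_i$ under $C$'' to chase; the only honest square has $B_{i+1}\hookrightarrow Z^1$ on top and $B_i\hookrightarrow \Omega^1_X$ on the bottom, and it points the wrong way for your purposes. Second, the stabilization of the tower $\{{\H}^n(X,B_i),C\}$ (which is correct) gives no control over the ${\H}^{n-1}$-level: the maps $C$ on ${\H}^{n-1}(X,B_i)$ need not be surjective, so you cannot ``propagate $\xi$ through the tower'' for free. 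The entire point of the hypothesis $\varphi_\ell(\alpha)=0$ is that it is precisely what permits the construction of a lift $C^{-1}(\alpha)\in{\H}^{n-1}(X,B_{\ell+1})$: one represents $\alpha$ by a \v{C}ech cocycle of sums $\sum_j (f^{(j)})^{p^{\ell-1-j}-1}df^{(j)}$, uses $\varphi_\ell(\alpha)=0$ to write it as a coboundary of $1$-forms, lifts those forms through the Cartier operator (surjective on sections over affines since ${\H}^1(U,B_1)=0$), and corrects by an exact form using the fact that $\ker C$ on $\Gamma(U,Z^1)$ consists of the exact forms $df$. Iterating yields classes $\alpha, C^{-1}(\alpha),\dots,C^{-t}(\alpha)\in {\H}^{n-1}(X,B_{\ell+t})$ for every $t$. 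This construction is entirely absent from your proposal.

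The second missing ingredient is how the contradiction is extracted. Your final paragraph asserts that a surviving nonzero class ``contradicts $\varphi_\ell(\xi)=0$,'' but nothing forces that; note also that the connecting map of $0\to W_\ell(\Ocal_X)\to W_\ell(\Ocal_X)\to B_\ell\to 0$ sends ${\H}^{n-1}(X,B_\ell)$ into ${\H}^n(X,W_\ell(\Ocal_X))$, not into ${\H}^n(X,B_\ell)$, and ${\rm Im}\, D_\ell$ is its kernel. The actual argument is a $p^{-1}$-linear independence count: choosing $m$ with $C^m(\alpha)\notin{\rm Im}\, D_{\ell-m}$ and $C^{m+1}(\alpha)\in{\rm Im}\, D_{\ell-m-1}$, any linear relation among $\alpha,C^{-1}(\alpha),\dots,C^{-t}(\alpha)$ modulo ${\rm Im}\, D_{\ell+t}$ is destroyed coefficient by coefficient upon applying $C^{t+m}$, so these $t+1$ classes are independent in ${\H}^{n-1}(X,B_{\ell+t})/{\rm Im}\, D_{\ell+t}$; by Proposition \ref{b-alternative} this gives $b(X)\ge t+1$ for all $t$, contradicting $b(X)<\infty$. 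Without the lifting construction and this unbounded dimension count, the argument does not close.
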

\begin{proof} Assuming that the result does not hold we consider the smallest
positive $\ell$ such that $\ker \varphi_{\ell} \not\subset {\rm Im}\, D_{\ell}$.
Then there exists a non-zero element $\alpha \in {\H}^{n-1}(X, B_{\ell})$ 
such that $\varphi_{\ell}(\alpha) = 0$ and
$\alpha \notin {\rm Im}\, D_{\ell}$. Let $m$ be the non-negative integer 
such that $C^{m}(\alpha) \notin {\rm Im}\, D_{\ell - m}$
and $C^{m+1}(\alpha) \in {\rm Im}\, D_{\ell - m - 1}$. Here we define 
${\rm Im}\, D_{k}  = 0$ for $k \leq 0$.
For any positive integer $s$ the commutativity of the diagram
\begin{displaymath}
\begin{xy}
\xymatrix{
{\H}^{n-1}(X,W_{i+1}) \ar[r]^{D_{i+1}} \ar[d]^R & {\H}^{n-1}(X,B_{i+1}) \ar[d]^C \\
{\H}^{n-1}(X,W_i) \ar[r]^{D_i} & {\H}^{n-1}(X,B_i) \\
}
\end{xy}
\end{displaymath}
implies $C^{m+s}(\alpha) \in {\rm Im}\, D_{\ell - m - s}$

We take an affine open covering $\{U_{i}\}$ of $X$. 
Then $\alpha$ is given by a \v{C}ech cocycle $\{\alpha_I\}$ with 
$I=i_0i_1\cdots i_{n-1}$ and 
$$
\alpha_I= \sum_{j=0}^{\ell-1} (f_I^{(j)})^{p^{\ell-1-j}-1} df_I^{(j)}
$$
for $f_I^{(j)}=f_{i_0i_1\cdots i_{n-1}}^{(j)}\in \Gamma(U_{i_{0}}\cap U_{i_{1}}\cap \ldots \cap U_{i_{n-1}}, {\Ocal}_{X})$. 
Since by assumption $\varphi_{\ell}(\alpha) = 0$, there exist
elements $\omega_{i_{0}i_{1}\ldots i_{n-2}}\in
\Gamma (U_{i_{0}}\cap U_{i_{1}}\cap \ldots \cap U_{i_{n-2}}, \Omega_{X}^{1})$
such that
$$
\alpha_I= \omega_{i_{1}i_{2}\ldots i_{n-1}}-\omega_{i_{0}i_{2}\ldots i_{n-1}}
+ \cdots +
(-1)^{n-1}\omega_{i_{0}i_{1}\ldots i_{n-2}} \eqno(1)
$$
For an affine open set $U$, we have $H^{1}(U, B_{1}) = 0$. Thus the exact
sequence 
$$
0\to B_1 \to Z^1 {\buildrel C \over \longrightarrow} \Omega^1_X \to 0
$$ 
implies that the Cartier operator
$C: \Gamma(U, Z^{1}) \rightarrow \Gamma(U, \Omega^{1}_{X})$ is surjective.
So we can find an element $\tilde{\omega}_{i_{0}i_{1}\ldots i_{n-2}}
\in \Gamma(U_{i_{0}}\cap U_{i_{1}}\cap \ldots \cap U_{i_{n-2}}, \Omega_{X}^{1})$
that maps to ${\omega}_{i_{0}i_{1}\ldots i_{n-2}}$ under $C$
and then 
$$
\tilde{\omega}_{i_{1}i_{2}\ldots i_{n-1}}-\tilde{\omega}_{i_{0}i_{2}
\ldots i_{n-1}} + \cdots +(-1)^{n-1}
\tilde{\omega}_{i_{0}i_{1}\ldots i_{n-2}}
$$
maps under $C$ to the right hand side of (1).
Since $\omega \in \Gamma(U, Z^{1})$ has $C(\omega) = 0$
if and only if $\omega = df$ for a suitable regular function
$f \in \Gamma(U, {\Ocal}_{X})$,
we can choose a regular function 
$f_I^{(\ell)}=f_{i_{0}i_{1}\ldots i_{n-1}}^{(\ell)} \in 
\Gamma( U_{i_{0}}\cap U_{i_{1}}\cap \ldots \cap U_{i_{n-1}}, {\Ocal}_{X})$
such that
$$
\sum_{j=0}^{\ell} (f_I^{(j)})^{p^{\ell-j}-1} df_I^{(j)} = 
\tilde{\omega}_{i_{1}i_{2}\ldots i_{n-1}}-\tilde{\omega}_{i_{0}i_{2}
\ldots i_{n-1}} + \cdots +(-1)^{n-1}
\tilde{\omega}_{i_{0}i_{1}\ldots i_{n-2}}
$$
In this way the cochain $
C^{-1}(\alpha) =\sum_{j=0}^{\ell} (f_I^{(j)})^{p^{\ell-j}-1} df_I^{(j)}$
becomes a co-cycle in the \v{C}ech co-chains of the sheaf $B_{\ell + 1}$ and
gives an element of ${\H}^{n-1}(X, B_{\ell + 1})$ .
Repeating this procedure $t$ times we obtain $t$ elements
$$
\alpha, C^{-1}(\alpha), C^{-2}(\alpha), \ldots, C^{-t}(\alpha)
$$
in ${\H}^{n-1}(X, B_{\ell + t})$.

Consider the vector space ${\H}^{n-1}(X, B_{\ell + t})/{\rm Im}\, D_{\ell + t}$
over $k$. Note that the Cartier operator 
$C:  B_{\ell + i + 1}\rightarrow B_{\ell + i }$ induces a $p^{-1}$-linear mapping 
$$
C : {\H}^{n-1}(X, B_{\ell + i + 1})/{\rm Im}\, D_{\ell + i + 1} \longrightarrow
{\H}^{n-1}(X, B_{\ell + i })/{\rm Im}\,  D_{\ell + i}.
$$
Suppose the elements 
$\alpha, C^{-1}(\alpha), C^{-2}(\alpha), \ldots, C^{-t}(\alpha)$
are linearly dependent over $k$ in ${\H}^{n-1}(X, B_{\ell + t})/{\rm Im} \,
D_{\ell + t}$.
So there exist elements $a_{i}\in k$ with $i = 0, 1, \ldots, t$ 
such that
$$
a_{0}\alpha + a_{1}C^{-1}(\alpha) + a_{2}C^{-2}(\alpha)+ \ldots + a_{t} C^{-t}(\alpha)
= 0
$$
in ${\H}^{n-1}(X, B_{\ell + t})/{\rm Im} D_{\ell + t}$.
By letting $C^{t+m}$ operate on both sides we have
$$
      a_{t}^{p^{-t-m}} C^{m}(\alpha) =  0
$$
in ${\H}^{n-1}(X, B_{\ell- m}))/{\rm Im}\, D_{\ell - m}$.
By our assumption, $C^{m}(\alpha)$ is not contained in ${\rm Im}\, D_{\ell - m}$. 
It follows that $a_{t} = 0$.
Repeating this procedure we see $a_{0} = a_{1} = \ldots = a_{t} = 0$.
This means that our elements are linearly independent over $k$
and we see that 
$\dim {\H}^{n-1}(X, B_{\ell + t})/{\rm Im}\, D_{\ell + t} \geq t + 1$
for any positive integer $t$, which
contradicts the finiteness of $b(X)$. We conclude that $\ker \varphi_{\ell} 
\subset {\rm Im}\, D_{\ell}$
for any positive integer~$\ell$.
\end{proof}
We now prove Theorem \ref{thm:b-estimate}. 
We have by Proposition \ref{b-alternative} and its proof  
and by Lemma \ref{b-lemma} that
$$
\dim {\H}^n(X,B_i)= \dim {\H}^{n-1}(X,B_i)/{\rm Im}\, D_i 
\leq \dim {\H}^{n-1}(X, B_{i})/
\ker \varphi_{i}
$$
and since 
$\dim {\H}^{n-1}(X, B_{i})/\ker \varphi_{i} \leq 
\dim {\H}^{n-1}(X, \Omega_{X}^{1})$, we derive the inequality
$b(X) \leq \dim \H^{n-1}(X, \Omega_{X}^{1})$.
This concludes the proof of Theorem \ref{thm:b-estimate}.
\end{section}
\bigskip

We conclude this section with a remark about the 
spaces ${\H}^{n-1}(X,B_i)$.
Using the natural inclusion $\psi_{i} : B_{i} \rightarrow B_{i + 1}$
 we have the induced linear mapping
$$
\psi_{i} : {\H}^{n-1}(X, B_{i}) \longrightarrow
{\H}^{n-1}(X,B_{i+1}).
$$
\begin{corollary}
Assume $b(X) < \infty$. If for all $i$ the map 
$D_{i} : {\H}^{n-1}(X, W_{i}({\Ocal}_{X})) \rightarrow 
{\H}^{n-1}(X, B_{i})$ is zero, then
$\psi_{i} : {\H}^{n-1}(X, B_{i}) \longrightarrow
{\H}^{n-1}(X,B_{i+1})$
is injective for any $i \geq 1$.
In particular, if ${\H}^{n-1}(X, {\Ocal}_{X}) = 0$ then $\psi_{i}$ is
injective
for any $i \geq 1$.
\end{corollary}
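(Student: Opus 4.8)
The plan is to obtain this as a quick consequence of Lemma~\ref{b-lemma}, once one notices how the maps $\psi_i$ and $\varphi_i$ fit together.

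First I would record the factorization of the natural inclusion $B_i \hookrightarrow \Omega^1_X$ through $B_{i+1}$, namely $B_i \stackrel{\psi_i}{\hookrightarrow} B_{i+1}\hookrightarrow \Omega^1_X$. By functoriality of cohomology this gives $\varphi_i = \varphi_{i+1}\circ \psi_i$ on ${\H}^{n-1}$, and hence $\ker \psi_i \subseteq \ker \varphi_i$. So it is enough to prove that $\varphi_i \colon {\H}^{n-1}(X,B_i)\to {\H}^{n-1}(X,\Omega^1_X)$ is injective for every $i\geq 1$.

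Now apply Lemma~\ref{b-lemma}: since $b(X)<\infty$ we have $\ker\varphi_i \subseteq {\rm Im}\, D_i$, where ${\rm Im}\, D_i$ denotes the image of $D_i\colon {\H}^{n-1}(X,W_i({\Ocal}_X))\to {\H}^{n-1}(X,B_i)$. By hypothesis this map vanishes for all $i$, so $\ker \varphi_i=0$, and therefore $\psi_i$ is injective. This proves the first assertion.

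For the last assertion I would reduce to the first one. It suffices to check that ${\H}^{n-1}(X,{\Ocal}_X)=0$ implies ${\H}^{n-1}(X,W_i({\Ocal}_X))=0$ for all $i\geq 1$, since then the map $D_i$ has zero source and the hypothesis of the corollary is automatically satisfied. This is a straightforward induction on $i$ using the exact sequences $0 \to {\Ocal}_X \stackrel{V^{i-1}}{\longrightarrow} W_i({\Ocal}_X) \stackrel{R}{\longrightarrow} W_{i-1}({\Ocal}_X)\to 0$ and the associated long exact cohomology sequences. I do not expect a genuine obstacle here; the only point that needs a little care is getting the direction of the factorization $\varphi_i=\varphi_{i+1}\circ\psi_i$ right, so that the injectivity extracted from Lemma~\ref{b-lemma} is exactly the statement that is needed.
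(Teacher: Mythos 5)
Your proposal is correct and follows essentially the same route as the paper: both use the factorization $\varphi_i=\varphi_{i+1}\circ\psi_i$ together with Lemma~\ref{b-lemma} to get $\ker\varphi_i\subseteq{\rm Im}\,D_i=0$, and both handle the second assertion by induction on $i$ via the standard Witt-vector exact sequence (the paper uses $0\to W_{i-1}({\Ocal}_X)\stackrel{V}{\to}W_i({\Ocal}_X)\stackrel{R^{i-1}}{\to}{\Ocal}_X\to 0$, you use the equally valid variant with $V^{i-1}$ and $R$). No gaps.
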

\begin{proof}
The first part  of this corollary follows from the fact that
the composition of the homomorphisms
$$
{\H}^{n-1}(X,B_{i}) {\buildrel \psi_{i} \over \longrightarrow} 
{\H}^{n-1}(X, B_{i + 1}) {\buildrel \varphi_{i + 1} \over \longrightarrow} 
{\H}^{n-1}(X, \Omega_{X}^{1}),
$$
where $\varphi_{i} = \varphi_{i + 1}\circ \psi_{i}$, is injective.
We have an  exact sequence
$$
0 \rightarrow W_{i-1}({\Ocal}_{X})  {\buildrel V \over \longrightarrow} 
W_{i}({\Ocal}_{X}) {\buildrel R^{i-1} \over \longrightarrow}
{\Ocal}_{X} \rightarrow  0.
$$
So with ${\H}^{n-1}(X, {\Ocal}_{X}) = 0$ we find inductively
${\H}^{n-1}(X, W_{i}({\Ocal}_{X})) = 0$. 
Thus the second statement follows from the first one.
\end{proof}
%%%%%%%%%%%%%%%
\begin{section}{The relation between the $b$-number and the $h$-number}
Assuming in this section that the Artin-Mazur formal group 
$\Phi_X=\Phi^{(n)}_X$is pro-representable by a formal Lie group 
we establish a relation
between the $h$-number (=height) and the $b$-number.

\begin{theorem}
Let $X$ be a non-singular complete algebraic variety with the Artin-Mazur 
formal group $\Phi_{X}$ pro-representable by a formal Lie group and let
$h(X)$ be the height of $\Phi_{X}$. Then we have the equality
$$
h(X)=b(X)+p_g(X),
$$
with $p_g(X)=\dim {\H}^n(X,O_X)$ the geometric genus.
\end{theorem}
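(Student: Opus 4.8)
The plan is to identify $\Phi_X$ with a $p$-divisible formal group, pass to its Dieudonn\'e module, and read off the three quantities $h(X)$, $b(X)$ and $p_g(X)$ from that one module. Assume first that $h=h(X)$ is finite; then $\Phi_X=\Phi_X^{(n)}$ is a $p$-divisible formal group over $k$ of dimension $p_g(X)$ (its tangent space is ${\H}^n(X,{\Ocal}_X)$ by Artin--Mazur) and height $h$. I would invoke the theorem of Artin--Mazur that identifies, for $\Phi_X$ pro-representable by a formal Lie group, its (Cartier--)Dieudonn\'e module with the Witt vector cohomology $M:={\H}^{n}(X, W({\Ocal}_{X}))$, carrying the Frobenius $F$ and Verschiebung $V$ of the Witt vectors. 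Thus $M$ is a free $W(k)$-module of rank $h$, and $FV=VF=p$ on $M$; in particular $F$ and $V$ are injective on $M$. Only the freeness of rank $h$ and the relation $FV=VF=p$ will be used, so the precise matching of the Witt $F$ with one of the two Dieudonn\'e operators is not needed below.

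Next I would assemble three ingredients. First, $\dim_k M/FM=b(X)$: this is exactly Proposition~\ref{b-Witt}. Second, $\dim_k M/VM=p_g(X)$: here I would use the short exact sequences of Witt sheaves
$$
0\longrightarrow W_{i-1}({\Ocal}_X)\stackrel{V}{\longrightarrow} W_{i}({\Ocal}_X)\stackrel{R^{i-1}}{\longrightarrow}{\Ocal}_X\longrightarrow 0 ,
$$
take cohomology, note that ${\H}^{n+1}=0$ because $\dim X=n$, so that ${\H}^{n}(X,W_i({\Ocal}_X))/V\,{\H}^{n}(X,W_{i-1}({\Ocal}_X))\cong{\H}^{n}(X,{\Ocal}_X)$ compatibly in $i$, and then pass to the projective limit (the transition maps $R$ on $\{{\H}^{n}(X,W_i({\Ocal}_X))\}$ satisfy Mittag--Leffler, as recorded in the proof of Proposition~\ref{b-Witt}) to get $M/VM\cong{\H}^{n}(X,{\Ocal}_X)$. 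Third, the elementary fact that for a free $W(k)$-module $M$ of finite rank equipped with a $\sigma$-linear endomorphism $F$ and a $\sigma^{-1}$-linear endomorphism $V$ with $FV=VF=p$ one has
$$
{\rm rank}_{W(k)}\,M=\dim_k M/FM+\dim_k M/VM ;
$$
this follows by reducing modulo $p$, observing that $F$ induces an isomorphism $M/VM\cong FM/pM$ (since $F$ is injective and $FVM=pM$), and applying rank--nullity to the reduction of $F$ on $M/pM$. Combining the three ingredients gives $h=b(X)+p_g(X)$ when $h<\infty$.

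Finally I would treat the case $h=\infty$: here $\Phi_X$ is not $p$-divisible, $M={\H}^{n}(X, W({\Ocal}_{X}))$ fails to be of finite type over $W(k)$, and one checks that then $b(X)=\dim_k M/FM=\infty$ as well, so the identity $h(X)=b(X)+p_g(X)$ holds with both sides $+\infty$; put differently, the finiteness of $b(X)$ already forces $M$ to be a finitely generated (hence free) $W(k)$-module, and hence $\Phi_X$ to be $p$-divisible of finite height, reducing to the previous case. The hard part of this argument is the opening step --- making the Artin--Mazur comparison between the Artin--Mazur formal group and $W$-cohomology precise, and in particular extracting from it that $M$ is free over $W(k)$ of rank exactly $h(X)$ when $h(X)<\infty$ (equivalently, that $\Phi_X$ is $p$-divisible of finite height precisely when $b(X)<\infty$); once that structural input is in place, the rest is the bookkeeping above.
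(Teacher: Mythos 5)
Your proposal follows essentially the same route as the paper: identify the Dieudonn\'e module of $\Phi_X$ with $M={\H}^n(X,W({\Ocal}_X))$, read off $h(X)$ as $\dim_k M/pM$, and use $p=FV=VF$ with $F,V$ injective to split this into $\dim_k M/FM=b(X)$ (Proposition~\ref{b-Witt}) plus $\dim_k M/VM=p_g(X)$. The only differences are cosmetic: you factor through $FM/pM\cong M/VM$ where the paper factors through $VM/pM\cong M/FM$, and you spell out the projective-limit argument for $M/VM\cong{\H}^n(X,{\Ocal}_X)$ and the infinite-height case, which the paper leaves implicit.
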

\begin{proof}
This follows on the one hand 
from our interpretation of the $b$-number in terms of
Witt vector cohomology (Lemma \ref{b-Witt}) and on the other hand by 
Dieudonn\'e theory that expresses the
height in terms of Witt vector cohomology as follows.
We use the covariant Dieudonn\'e module theory (Cartier Dieudonn\'e module 
theory). For our Artin-Mazur formal group, the Dieudonn\'e module is 
given by ${\H}^{n}(X, W({\Ocal}_{X}))$.  So
the general theory of Dieudonn\'e modules implies
$$
h(X) = \dim {\H}^{n}(X, W({\Ocal}_{X}))/p{\H}^{n}(X, W({\Ocal}_{X})).
$$
Since $p=VF$ we have an exact sequence
$$
\begin{aligned}
0 \to V {\H}^n(X,W({\Ocal}_X))/p{\H}^{n}(X, W({\Ocal}_{X})) \longrightarrow
{\H}^{n}(X, W({\Ocal}_{X}))/p{\H}^{n}(X, W({\Ocal}_{X})) &\\
\longrightarrow {\H}^{n}(X, W({\Ocal}_{X}))/V{\H}^{n}(X, W({\Ocal}_{X})) \to 0 & \\
\end{aligned}
$$
and we now have to calculate the dimensions of the second and 
fourth term in this sequence. By general Dieudonn\'e module theory
the Verschiebung $V$ acting on ${\H}^{n}(X, W({\Ocal}_{X}))$ is injective
so that $p=VF$ implies
$$
{\H}^{n}(X, W({\Ocal}_{X}))/F{\H}^{n}(X, W({\Ocal}_{X})) \cong
V{\H}^{n}(X, W({\Ocal}_{X}))/p{\H}^{n}(X, W({\Ocal}_{X}))
$$
and we know by Proposition 
\ref{b-Witt} that its dimension is $b(X)$. As to the dimension
of the fourth term we observe that the exact sequence
$$
0 \to V {\H}^{n}(X, W({\Ocal}_{X}))\longrightarrow 
{\H}^{n}(X, W({\Ocal}_{X})) \longrightarrow
{\H}^{n}(X, {\Ocal}_{X}) \to 0
$$
shows that the dimension of the fourth term is 
$\dim {\H}^{n}(X, {\Ocal}_{X}) =p_g(X)$.
\end{proof}

By Theorem \ref{thm:b-estimate} we get the following upper bound on the
$h$-number.

\begin{corollary}\label{h-bound}
If $b(X)< \infty$ we have 
$$h(X) \leq \dim{\H}^{n-1}(X,\Omega_X^1) + p_g(X).
$$
\end{corollary}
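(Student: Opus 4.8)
The plan is to read the corollary off directly from the two results just established, so essentially no new work is needed. First I would invoke the Theorem of this section. Since we are assuming throughout that $\Phi_X$ is pro-representable by a formal Lie group, that theorem applies and yields the exact formula
$$
h(X)=b(X)+p_g(X).
$$
In particular the hypothesis $b(X)<\infty$ forces $h(X)<\infty$ as well, and all three quantities are honest non-negative integers, since $X$ is complete and non-singular so that every coherent cohomology group in sight is finite-dimensional.

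Next I would feed in the inequality of Theorem \ref{thm:b-estimate}. That theorem is proved under precisely the hypothesis $b(X)<\infty$ that we are assuming, and it gives $b(X)\leq \dim {\H}^{n-1}(X,\Omega_X^1)$. Substituting this into the displayed equality at once produces
$$
h(X)=b(X)+p_g(X)\leq \dim {\H}^{n-1}(X,\Omega_X^1)+p_g(X),
$$
which is the assertion of the corollary.

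There is no genuine obstacle here; the only point that wants a moment's care is the compatibility of the hypotheses of the two ingredients. The equality $h(X)=b(X)+p_g(X)$ requires pro-representability of $\Phi_X$, which is the running assumption of the present section, while Theorem \ref{thm:b-estimate} requires $b(X)<\infty$, which is the hypothesis of the corollary; both are in force, so the two statements may be chained without further ado.
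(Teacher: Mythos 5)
Your proposal is correct and is exactly the paper's argument: the corollary is obtained by substituting the bound $b(X)\leq \dim {\H}^{n-1}(X,\Omega_X^1)$ from Theorem \ref{thm:b-estimate} into the equality $h(X)=b(X)+p_g(X)$, under the section's running assumption of pro-representability. Nothing further is needed.
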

The following corollary was already obtained in \cite{GK4}. In view of its
interest we state it in our new framework.
\begin{corollary}
Let $X$ be a Calabi-Yau variety of dimension $n \geq 1$. Then 
if $h(X) < \infty$ we have
$$
  h(X) \leq \dim {\H}^{n-1}(X, \Omega^{1}_{X}) + 1
$$
In particular, if $X$ is rigid, then $h(X) = 1$ or $\infty$.
\end{corollary}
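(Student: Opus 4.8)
The plan is to deduce this corollary directly from Corollary~\ref{h-bound} together with the defining cohomological properties of a Calabi--Yau variety. Recall that a Calabi--Yau variety $X$ of dimension $n$ has trivial canonical bundle $\omega_X\cong\Ocal_X$, so by Serre duality $\dim{\H}^n(X,\Ocal_X)=\dim{\H}^0(X,\omega_X)=1$, i.e.\ $p_g(X)=1$. Substituting $p_g(X)=1$ into the inequality $h(X)\leq\dim{\H}^{n-1}(X,\Omega_X^1)+p_g(X)$ of Corollary~\ref{h-bound}, which applies since $h(X)<\infty$ forces $b(X)<\infty$ by the relation $h(X)=b(X)+p_g(X)$, yields immediately
$$
h(X)\leq\dim{\H}^{n-1}(X,\Omega_X^1)+1.
$$
This is the first assertion.

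For the second assertion, suppose $X$ is rigid, meaning ${\H}^1(X,T_X)=0$ where $T_X$ is the tangent sheaf. The point is that for a Calabi--Yau variety the triviality of $\omega_X$ gives an isomorphism $T_X\cong\Omega_X^{n-1}\otimes\omega_X^{-1}\cong\Omega_X^{n-1}$ (contraction against a nowhere-vanishing global $n$-form), hence ${\H}^1(X,\Omega_X^{n-1})\cong{\H}^1(X,T_X)=0$. Then Serre duality identifies ${\H}^{n-1}(X,\Omega_X^1)$ with the dual of ${\H}^1(X,\Omega_X^{n-1}\otimes\omega_X^{\vee})\cong{\H}^1(X,\Omega_X^{n-1})=0$; more directly, Serre duality gives ${\H}^{n-1}(X,\Omega_X^1)^{\vee}\cong{\H}^1(X,\Omega_X^{n-1})$ since $\Omega_X^1$ and $\Omega_X^{n-1}$ are dual via the perfect pairing $\Omega_X^1\otimes\Omega_X^{n-1}\to\omega_X$. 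Therefore $\dim{\H}^{n-1}(X,\Omega_X^1)=0$, and the displayed inequality forces $h(X)\leq 1$. Since the $h$-number is always a positive integer or $\infty$, we conclude $h(X)=1$ (the case $h(X)=\infty$ being excluded by hypothesis, or simply: if finite then $h(X)=1$).

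The only genuine subtlety — and the step I would be most careful about — is pinning down exactly what ``Calabi--Yau variety'' and ``rigid'' are taken to mean here, since conventions vary (one sometimes additionally requires ${\H}^i(X,\Ocal_X)=0$ for $0<i<n$, which would make the bound even sharper but is not needed). The argument above uses only $\omega_X\cong\Ocal_X$ for the first part and, for the second part, the identification ${\H}^{n-1}(X,\Omega_X^1)\cong{\H}^1(X,T_X)^{\vee}$ coming from $T_X\cong\Omega_X^{n-1}$ and Serre duality. One should check that these two isomorphisms are compatible so that the rigidity hypothesis ${\H}^1(X,T_X)=0$ does indeed translate into ${\H}^{n-1}(X,\Omega_X^1)=0$; this is where I would spend the bulk of the verification, though it is standard. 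Everything else is a direct substitution into the already-established Corollary~\ref{h-bound}.
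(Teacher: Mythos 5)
Your proof is correct and follows essentially the same route as the paper: substitute $p_g(X)=1$ into Corollary~\ref{h-bound} (noting $h(X)<\infty$ gives $b(X)<\infty$ via $h=b+p_g$), and for the rigid case use the Calabi--Yau identification ${\H}^1(X,\Theta_X)\cong{\H}^{n-1}(X,\Omega_X^1)$ to force the bound down to $1$. You merely spell out the Serre-duality details that the paper leaves implicit.
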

\begin{proof}
Note that Theorem \ref{thm:b-estimate} implies that $b(X)<\infty$.
Since $\dim {\H}^{n}(X,{\Ocal}_{X}) = 1$, the first inequality follows
from Corollary \ref{h-bound}. If $X$ is
rigid then ${\H}^{1}(X, \Theta_{X}) = 0$ by definition
and by ${\H}^{1}(X, \Theta_{X}) \cong {\H}^{n-1}(X, \Omega_{X}^{1})$,
the conclusion follows from the inequality.
\end{proof}
\end{section}
%%%%%%%%%%%%%%%%%
\begin{section}{Relations between the $a$-number and the $b$-number}
In this section we shall assume that the Hodge-to de Rham spectral
sequence degenerates at the $E_1$-term.

\begin{theorem}
If $b(X)<\infty$ then $a(X)=0$ or $a(X)=1$.
\end{theorem}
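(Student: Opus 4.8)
The plan is to connect the $a$-number, which measures how far $F({\H}^n(X,\Ocal_X))$ sits inside the Hodge filtration, with the $b$-number, which by Proposition~\ref{b-Witt} measures the cokernel of $F$ on $\H^n(X,W(\Ocal_X))$. The geometric picture is that $\H^n(X,W(\Ocal_X))$ surjects onto $\H^n(X,\Ocal_X)$ (the length-one truncation), and the Frobenius $F$ on Witt vector cohomology is compatible with the Frobenius $F$ acting on $\H^n_{\rm dR}(X)$ through $\H^n(X,\Ocal_X)$, up to the divided-power issue. So I would first recall that an element $\xi\in\H^n(X,W(\Ocal_X))$ with nonzero image $\bar\xi\in\H^n(X,\Ocal_X)$ has $F\xi$ mapping to $F\bar\xi\in\H^n_{\rm dR}(X)$, and that the value $a(X)\geq 2$ would force $F\bar\xi$ to land in $\F^2$ for \emph{some} choice of $\bar\xi$.

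The main step is then a quantitative one: I would argue that if $a(X)\geq 2$, then $F$ is \emph{not} surjective modulo $p$ in a way that propagates through all truncation levels $W_i$, so that $b(X)=\dim\lim_{\leftarrow}\H^n(X,B_i)$ becomes infinite. Concretely, one uses the description $b(X)=\max_i\dim\H^n(X,B_i)$ together with the map of exact sequences in the proof of Proposition~\ref{b-Witt}: the Cartier transition maps $C\colon\H^n(X,B_{i+1})\to\H^n(X,B_i)$ are surjective, so $b(X)<\infty$ forces these to eventually be isomorphisms, i.e.\ the system stabilizes. I would show that $a(X)\geq 2$ produces, at each level, an element of $\H^n(X,B_i)$ not in the image coming from lower levels — essentially an element detected by the fact that $F$ on $\H^n_{\rm dR}$ pushes $\Ocal_X$-cohomology two steps down the Hodge filtration rather than one. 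This is the analogue, at the $\H^n$ end, of the linear-independence argument in Lemma~\ref{b-lemma}: one manufactures an infinite family of linearly independent classes by repeatedly applying $C^{-1}$, using that $p^{-1}$-linear maps cannot collapse a stable nonzero direction.

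To make that rigorous I expect to pass through de Rham-Witt cohomology: the class of $F$ modulo $p$ on $\H^n(X,W(\Ocal_X))$, together with the Hodge filtration on $\H^n_{\rm dR}(X)$ via the comparison with the first de Rham-Witt level, gives a filtration-drop invariant that coincides with $a(X)$. If $a(X)\geq 2$, this drop is visible after dividing by $p$ at every finite truncation, hence $\H^n(X,W(\Ocal_X))/F$ has no finite length, contradicting $b(X)<\infty$. Equivalently, phrased via Corollary~\ref{h-bound} and the relation $h(X)=b(X)+p_g(X)$: $a(X)\geq 2$ should force $h(X)=\infty$, and I would be happy to prove it in that guise instead.

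The hard part will be pinning down the precise compatibility between the two Frobenii — the one on $\H^n(X,W(\Ocal_X))$ from the Witt-vector structure and the one on $\H^n_{\rm dR}(X)$ from crystalline/de Rham theory — and in particular controlling the divided-power corrections so that ``$F\bar\xi\in\F^2$'' genuinely obstructs surjectivity of $F$ on $W_i$-cohomology for all $i$, not just $i=1$. I would isolate this as a lemma: under $E_1$-degeneration, $a(X)\geq 2$ implies that the map $D_i\colon\H^n(X,W_i(\Ocal_X))\to\H^n(X,B_i)$ fails to be eventually an isomorphism onto the stable part, and then the linear-independence trick from Lemma~\ref{b-lemma}, applied with the $p^{-1}$-linear operator $C$ on the tower $\{\H^n(X,B_i)\}$, yields $b(X)=\infty$.
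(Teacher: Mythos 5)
Your overall instinct --- that the finiteness of $b(X)$ must be brought to bear through a Lemma~\ref{b-lemma}-type statement to rule out $a(X)\geq 2$ --- is in the right spirit, but the proposal has a genuine gap: the bridge between the Hodge-filtration position of $F(\alpha)$ and the cohomology of the sheaves $B_i$ is exactly the step you defer (``pinning down the precise compatibility between the two Frobenii''), and that step is where all the content of the proof lies. Moreover you look for the obstruction in the wrong place: you want to manufacture classes in the tower $\{{\H}^n(X,B_i)\}$, whereas the relevant object is ${\H}^{n-1}(X,B_1)$. The actual mechanism is elementary and uses only $W_1={\Ocal}_X$ and $B_1$, no de Rham--Witt or crystalline/divided-power theory: if $a(X)\geq 1$ there is a nonzero $\alpha\in{\H}^n(X,{\Ocal}_X)$ with $F(\alpha)\in{\rm F}^1$, i.e.\ $\alpha$ is killed by the $p$-linear Frobenius on ${\H}^n(X,{\Ocal}_X)$; the long exact sequence of $0\to{\Ocal}_X\stackrel{F}{\to}{\Ocal}_X\to B_1\to 0$ then produces $\beta\in{\H}^{n-1}(X,B_1)$ with $\delta(\beta)=\alpha$, hence $\beta\notin{\rm Im}\,D_1$; and a \v{C}ech computation identifies the class of $F(\alpha)$ in ${\rm F}^1/{\rm F}^2\cong{\H}^{n-1}(X,\Omega^1_X)$ with $\varphi_1(\beta)$. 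The $E_1$-degeneration gives ${\rm Im}\,D_1\subset\ker\varphi_1$ (since $d=0$ on ${\H}^{n-1}(X,{\Ocal}_X)$), and Lemma~\ref{b-lemma} at level $\ell=1$ --- this is the only place $b(X)<\infty$ enters --- gives $\ker\varphi_1\subset{\rm Im}\,D_1$; together they force $\varphi_1(\beta)\neq 0$, so $F(\alpha)\notin{\rm F}^2$ and $a(X)=1$.

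Without the identification $F(\alpha)\equiv\varphi_1(\beta)\pmod{{\rm F}^2}$, your claim that ``$a(X)\geq 2$ produces, at each level, an element of ${\H}^n(X,B_i)$ not in the image coming from lower levels'' is unsubstantiated, and I do not see how it could be made to work as stated: the condition $F(\alpha)\in{\rm F}^2$ concerns ${\H}^{n-2}(X,\Omega^2_X)$ and deeper graded pieces, which the $B_i$-tower (built from subsheaves of $\Omega^1_X$) does not see directly; any argument must first reduce to the vanishing of the ${\rm F}^1/{\rm F}^2$-component, which is the step above. The reformulation via $h(X)=b(X)+p_g(X)$ does not help either, since it merely restates the contrapositive without supplying the mechanism. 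Finally, note that the linear-independence argument you invoke need not be redone here: it is already packaged in Lemma~\ref{b-lemma}, applied in degree $n-1$ rather than $n$.
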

\begin{proof}
Consider the commutative diagram
\begin{displaymath}
\begin{xy}
\xymatrix{
{\H}^{n-1}(X,{\Ocal}_X) \ar[d]^{D_1} \ar[dr]^d \\
{\H}^{n-1}(X,B_1) \ar[r]^{\varphi_1} & {\H}^{n-1}(X,\Omega_X^1) \, .\\
}
\end{xy}
\end{displaymath}
Since by our assumption the Hodge-to-de Rham spectral sequence degenerates at
the $E_1$-term the map $d$ is zero. Since by Lemma \ref{b-lemma} the
kernel $\ker \varphi_1$ is contained in the image ${\rm Im}\, D_1$
we see that $\ker \varphi_1={\rm Im} \, D_1$ and we thus have an
injective homomorphism
$$
{\H}^{n-1}(X,B_1)/{\rm Im}\, D_1 \hookrightarrow {\H}^{n-1}(X, \Omega_{X}^1).
$$
If $a(X)\geq 1$ then there exists a non-zero element
$\alpha \in {\H}^{n}(X, {\Ocal}_{X})$ such that $F(\alpha) \in {\rm F}^{1}$.
This means we have an element
$F(\alpha) \in {\rm F}^{1}/{\rm F}^{2} \cong {\H}^{n-1}(X,\Omega_{X}^{1})$.
The exact sequence
$$
0 \rightarrow {\Ocal}_{X} {\buildrel F \over \longrightarrow} {\Ocal}_{X}
\longrightarrow B_{1} \rightarrow 0,
$$
gives rise to the long exact sequence
$$
\to {\H}^{n-1}(X,{\Ocal}_X) {\buildrel D_1 \over \longrightarrow}
{\H}^{n-1}(X,B_1) {\buildrel \delta \over \longrightarrow} {\H}^{n}(X,{\Ocal}_X)
{\buildrel F \over \longrightarrow } {\H}^{n}(X,{\Ocal}_X) \, .
$$
Now $F(\alpha) = 0$ in ${\rm F}^{0}/{\rm F}^{1}
\cong {\H}^{n}(X, {\Ocal}_{X})$
and this tells us that there exists an element
$\beta \in {\H}^{n-1}(X, B_{1})$
such that $\delta (\beta) = \alpha$.  Since $\alpha \neq 0$,
we see $\beta \notin {\rm Im}\, D_{1}$.
Using \v{C}ech comomology, it is easy to see that
$F(\alpha) = \varphi_{1}(\beta)$ in ${\H}^{n-1}(X, \Omega_{X}^{1})$.
Since $F(\alpha) = \varphi_{1}(\beta) \neq 0$, we see  $F(\alpha) \neq 0$
in ${\H}^{n-1}(X, \Omega_{X}^{1})$. Therefore, we have
 $F(\alpha) \notin {\rm F}^{2}$ and it follows that $a(X) = 1$.
\end{proof}

In particular, if the $h$-number is defined then $b(X)<\infty$ implies
$h(X)< \infty$ and then for K3 surfaces and abelian varieties our result
implies results like those in  
\cite{Og}, \cite{GK1}, \cite{GK2} and \cite{GK3}, Proposition 9.4.

\begin{proposition}
For the variety $X$ we have:
$a(X) = 0$ if and only if $b(X)=0$.
\end{proposition}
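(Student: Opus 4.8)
The plan is to use the vanishing of ${\H}^n(X,B_1)$ as a bridge, establishing
$$
a(X)=0 \Longleftrightarrow {\H}^n(X,B_1)=0 \Longleftrightarrow b(X)=0 .
$$

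\emph{First equivalence.} I would start from the exact sequence
$$
0 \to {\Ocal}_X {\buildrel F \over \longrightarrow} {\Ocal}_X
{\buildrel D_1 \over \longrightarrow} B_1 \to 0 ,
$$
the case $i=1$ of the sequence $0 \to W_i({\Ocal}_X) {\buildrel F \over \longrightarrow} W_i({\Ocal}_X){\buildrel D_i \over \longrightarrow} B_i \to 0$ used in Section \ref{sec:b-number}. Since $\dim X = n$, its long exact cohomology sequence ends with
$$
{\H}^n(X,{\Ocal}_X) {\buildrel F \over \longrightarrow} {\H}^n(X,{\Ocal}_X)
{\buildrel D_1 \over \longrightarrow} {\H}^n(X,B_1) \to 0 ,
$$
so that ${\H}^n(X,B_1)\cong {\H}^n(X,{\Ocal}_X)/F\,{\H}^n(X,{\Ocal}_X)$. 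The operator $F$ here is $p$-linear and ${\H}^n(X,{\Ocal}_X)$ is a finite-dimensional vector space over the perfect field $k$, so $F$ is surjective on ${\H}^n(X,{\Ocal}_X)$ if and only if it is injective. On the other hand, recall from the proof of the preceding theorem that $a(X)\geq 1$ holds exactly when there is a nonzero $\alpha\in{\H}^n(X,{\Ocal}_X)$ with $F(\alpha)=0$ in ${\rm F}^0/{\rm F}^1\cong{\H}^n(X,{\Ocal}_X)$; this uses only the definition of $a(X)$ and the injectivity of $F\colon {\rm F}^0/{\rm F}^1 \to {\H}^n_{\rm dR}(X)$ (Katz \cite{Kz}), not any finiteness of $b(X)$. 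Hence $a(X)=0$ iff $F$ is injective on ${\H}^n(X,{\Ocal}_X)$, iff $F$ is surjective there, iff ${\H}^n(X,B_1)=0$.

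\emph{Second equivalence.} If $b(X)=0$ then $\dim {\H}^n(X,B_1)\leq \max_{i\geq 1}\dim {\H}^n(X,B_i)=b(X)=0$. Conversely, suppose ${\H}^n(X,B_1)=0$. For each $i\geq 1$ the exact sequence
$$
0 \to B_1 \longrightarrow B_{i+1} {\buildrel C \over \longrightarrow} B_i \to 0
$$
gives the exact piece
$$
{\H}^n(X,B_1) \longrightarrow {\H}^n(X,B_{i+1}) {\buildrel C \over \longrightarrow}
{\H}^n(X,B_i) \longrightarrow {\H}^{n+1}(X,B_1) ,
$$
and since both ${\H}^n(X,B_1)$ and ${\H}^{n+1}(X,B_1)$ vanish (the latter because $\dim X=n$), the map $C$ is an isomorphism. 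An induction on $i$ then yields ${\H}^n(X,B_i)\cong {\H}^n(X,B_1)=0$ for all $i\geq 1$, whence $b(X)=0$.

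I expect the only delicate point to be the identification, in the first equivalence, of the endomorphism that the de Rham Frobenius induces on the graded piece ${\rm F}^0/{\rm F}^1$ with the operator $F$ occurring in the sequence $0\to{\Ocal}_X\to{\Ocal}_X\to B_1\to 0$; but this is exactly the compatibility already invoked in the proof of the preceding theorem, so it raises no new difficulty. The real content of the argument is the elementary observation that a $p$-linear endomorphism of a finite-dimensional vector space over a perfect field is injective precisely when it is surjective, together with the bootstrap along the sequences $0\to B_1\to B_{i+1}\to B_i\to 0$ in the second step.
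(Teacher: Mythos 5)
Your proposal is correct and follows essentially the same route as the paper: both directions hinge on the sequence $0\to{\Ocal}_X\to{\Ocal}_X\to B_1\to 0$ identifying ${\H}^n(X,B_1)$ with the cokernel of $F$ on ${\H}^n(X,{\Ocal}_X)$, the injective-iff-surjective property of the $p$-linear $F$, and the bootstrap along $0\to B_1\to B_{i+1}\to B_i\to 0$. You merely make explicit a couple of steps the paper leaves implicit (the perfect-field argument and the vanishing of ${\H}^{n+1}(X,B_1)$).
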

\begin{proof}
If $a(X)=0$ then in the Hodge filtration we have 
${\rm Im}\, F\cap {\rm F}^1= (0)$
resulting in an isomorphism
$$
F: {\H}^n(X,{\Ocal}_X) \cong {\rm F}^0/{\rm F}^1 \cong {\H}^n(X,{\Ocal}_X)\, .
$$
Therefore the exact sequence 
$$
0 \to {\Ocal}_X {\buildrel F \over \longrightarrow} {\Ocal}_X 
{\buildrel d \over \longrightarrow} 
B_1 \to 0 \eqno(1)
$$ 
implies that ${\H}^n(X,B_1)=(0)$. 
The exact sequence 
$$
0\to B_1 \to B_{i+1}{\buildrel C \over \longrightarrow} B_i\to 0
$$ 
gives inductively ${\H}^n(X,B_i)=(0)$ for all $i>0$. 
This implies that $b(X)=0$.

Conversely, if $b(X)=0$ then in particular ${\H}^n(X,B_1)=(0)$. 
By the exact sequence (1) we see that 
$F: {\H}^n(X,{\Ocal}_X)\to {\H}^n(X,{\Ocal}_X)$ is
surjective and thus an isomorphism. 
Since ${\H}^{n}(X, {\Ocal}_{X})\cong {\rm F}^{0}/{\rm F}^{1}$, 
we conclude that $({\rm Im}\, F )\cap {\rm F}^{1} = (0)$ and so $a(X)=0$.
\end{proof}
\end{section}
%%%%%%%%%%%%%%%%%%%%%%%%%%%%%%%%%%%%%%%%%
%%%%%%%%%%%%%%%%%%%%%%%%%%%%%%%%%%%%%%%%%
\begin{section}{Fermat surfaces}
As an example we now calculate the $a$-number of 
Fermat surfaces and deduce consequences for the $h$-number.
Recall that a non-singular complete curve $C$ is said to be  
{\sl ordinary} if its Jacobian variety is an ordinary abelian variety.
Note that a curve $C$ is ordinary if and only if Frobenius induces a bijective
map from ${\rm H}^1(C,O_C)$ to its image. Hence $a(C)=0$ if 
$C$ is ordinary and $a(C)=1$ otherwise.

\begin{proposition}
Let $C_{1}$ and $C_{2}$ be non-singular complete algebraic curves
defined over $k$. Then the $a$-number of $X=C_{1}\times C_{2}$ satisfies
$$
a(C_1\times C_2)= \# \{i: 1\leq i \leq 2, \hbox{\rm $C_i$ is not ordinary}\}
$$
\end{proposition}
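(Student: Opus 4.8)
The plan is to reduce the statement to the K\"unneth decomposition of ${\H}^2_{\rm dR}(X)$ together with a short piece of linear algebra. I assume both curves have positive genus (the complementary case being degenerate). Since the Hodge-to-de Rham spectral sequence degenerates for smooth projective curves, it degenerates for $X=C_1\times C_2$ by K\"unneth, so that $a(X)$ is defined. Write $g_i=g(C_i)$, set $V_i={\H}^1_{\rm dR}(C_i)$ and $U_i={\rm F}^1V_i={\H}^0(C_i,\Omega^1_{C_i})$, and let $W_i\subseteq V_i$ be the image of Frobenius on ${\H}^1_{\rm dR}(C_i)$. Since $F$ is injective on ${\H}^1(C_i,\Ocal_{C_i})$ by Katz \cite{Kz}, we have $\dim W_i=\dim U_i=g_i$ and $\dim V_i=2g_i$, and by the characterization of ordinariness recalled just above, $C_i$ is ordinary if and only if $W_i\cap U_i=0$.

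First I would invoke that the K\"unneth isomorphism
$$
{\H}^2_{\rm dR}(X)\cong\bigoplus_{a+b=2}{\H}^a_{\rm dR}(C_1)\otimes{\H}^b_{\rm dR}(C_2)
$$
is compatible with the Hodge filtration and, the de Rham Frobenius being multiplicative, with the operator $F$, which acts as $F\otimes F$ on each summand. Now ${\H}^2_{\rm dR}(C_i)={\H}^1(C_i,\Omega^1_{C_i})$ equals its own ${\rm F}^1$, so $F$ vanishes on it; hence the two outer summands ${\H}^0_{\rm dR}(C_1)\otimes{\H}^2_{\rm dR}(C_2)$ and ${\H}^2_{\rm dR}(C_1)\otimes{\H}^0_{\rm dR}(C_2)$ are killed by $F$ and lie outside ${\rm F}^2{\H}^2_{\rm dR}(X)$. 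Consequently both ${\rm Im}\,F$ and ${\rm F}^2{\H}^2_{\rm dR}(X)$ are contained in the middle summand $M=V_1\otimes V_2$, on which ${\rm Im}\,F=W_1\otimes W_2$ and whose inherited Hodge filtration is
$$
{\rm F}^0M=V_1\otimes V_2\ \supset\ {\rm F}^1M=U_1\otimes V_2+V_1\otimes U_2\ \supset\ {\rm F}^2M=U_1\otimes U_2\ \supset\ {\rm F}^3M=0 .
$$
Because the filtration respects the K\"unneth direct sum, $({\rm Im}\,F)\cap{\rm F}^j{\H}^2_{\rm dR}(X)=(W_1\otimes W_2)\cap{\rm F}^jM$, so that $a(X)=\max\{\,j:(W_1\otimes W_2)\cap{\rm F}^jM\ne 0\,\}$.

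It then remains to carry out the linear algebra on $M$. Put $S_i=U_i\cap W_i$ and choose complements so that $V_i=S_i\oplus P_i\oplus Q_i\oplus R_i$ with $U_i=S_i\oplus P_i$ and $W_i=S_i\oplus Q_i$. Expanding $V_1\otimes V_2$ into the sixteen ``boxes'' $X_1\otimes X_2$ and reading off which boxes occur in $W_1\otimes W_2$, in ${\rm F}^1M$, and in ${\rm F}^2M$, one finds
$$
(W_1\otimes W_2)\cap{\rm F}^2M=S_1\otimes S_2,\qquad (W_1\otimes W_2)\cap{\rm F}^1M=(S_1\otimes S_2)\oplus(S_1\otimes Q_2)\oplus(Q_1\otimes S_2).
$$
Since $W_i\ne 0$, the first space is nonzero exactly when $S_1\ne 0$ and $S_2\ne 0$, i.e.\ when both curves are non-ordinary, and the second is nonzero exactly when $S_1\ne 0$ or $S_2\ne 0$, i.e.\ when at least one curve is non-ordinary; moreover $(W_1\otimes W_2)\cap{\rm F}^0M=W_1\otimes W_2\ne 0$ while ${\rm F}^3M=0$. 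Running through the three cases then gives $a(X)=\#\{i:C_i\ \text{not ordinary}\}$, as asserted.

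I expect the one genuinely delicate point to be the K\"unneth bookkeeping of the second paragraph --- in particular the remark that ${\H}^2$ of a curve carries the zero Frobenius, which is exactly what pushes the whole problem into the middle K\"unneth summand $M$. Once that is in place, the identification ${\rm Im}\,F=W_1\otimes W_2$, the explicit form of ${\rm F}^\bullet M$, and the box computation are routine, and the equivalence ``$C_i$ ordinary $\iff W_i\cap U_i=0$'' is immediate from the characterization of ordinary curves given before the proposition.
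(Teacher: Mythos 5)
Your proof is correct and follows essentially the same route as the paper: both reduce via K\"unneth to the middle summand ${\H}^1_{\rm dR}(C_1)\otimes{\H}^1_{\rm dR}(C_2)$, use that Frobenius acts there as $F\otimes F$ with image $W_1\otimes W_2$, and compare against ${\rm F}^1$ and ${\rm F}^2=U_1\otimes U_2$. Your explicit four-fold splitting $V_i=S_i\oplus P_i\oplus Q_i\oplus R_i$ merely makes the mixed case (exactly one curve ordinary) more airtight than the paper's rather terse treatment of it.
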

\proof{ If both $C_{1}$ and $C_{2}$ are ordinary, the Frobenius map
acts bijectively  on
${\rm H}^{2}(X, { O}_{X}) = 
{\rm H}^{1}(C_{1}, {O}_{C_{1}}) \otimes {\rm H}^{1}(C_{2}, { O}_{C_{2}})$
and $a(X)=0$. Consider for $i=1,2$
the Hodge filtration of de Rham cohomology
$$
{\rm H}_{DR}^{1}(C_{i})=F_{(i)}^{0} \supset F_{(i)}^{1} \supset (0) \,  .
$$
The Hodge filtration
$$
{\rm H}_{DR}^{2}(X) =F^{0} \supset F^{1} \supset F^{2} \supset (0)
$$
of de Rham cohomology ${\rm H}^{2}_{DR}(X)$  has $F^2$ given by
$$
F^{2} = F_{(1)}^{1} \otimes F_{(2)}^{1}\, .
$$
We consider the Frobenius map
$$
{\rm H}^{2}(X, { O}_{X}) \cong {\rm H}^{1}(C_{1}, {O}_{C_{1}}) 
\otimes {\rm H}^{1}(C_{2}, { O}_{C_{2}}) 
\stackrel{F\otimes F}{\longrightarrow} {\rm H}^{2}_{DR}(X)
$$
If both $C_{i}$ are non-ordinary, then there exists an element $\alpha_{i}$ in 
${\rm H}^{1}(C_{i}, {O}_{C_{i}})$ such that $F(\alpha_{i}) = 0$ on
${\rm H}^{1}(C_{i}, {O}_{C_{i}})$.
This means that
$$
(F\otimes F)(\alpha_{1} \otimes \alpha_{2}) \in F^{2} 
= F_{(1)}^{1} \otimes F_{(2)}^{1}
$$
and $a(X)=2$.
If exactly one is ordinary then the image of ${\rm H}^2_{\rm dR}(X,O_X)$
lies in $F^1 \cap {\rm H}^1_{\rm dR}(C_1) \otimes {\rm H}^1_{\rm dR}(C_2)$
and does not lie in $F_{(1)}^1\otimes F_{(2)}^1$.
We thus see that the $a$-number equals the number of non-ordinary factors.
}

\begin{remark}
If both $C_{1}$ and $C_{2}$ are non-ordinary, then $a(X) = 2$, and hence
in this case the $h$-number of $C_{1}\times C_{2}$ is equal to $\infty$.
\end{remark}

Let $X_{m}$ be the Fermat surface defined in ${\bf P}^3$ by the homogeneous
equation
$$
      z_{0}^{m} + z_{1}^{m} + z_{2}^{m}+ z_{3}^{m} = 0\, .
$$
We assume $m\geq 4$ and that $m$ is prime to the characteristic $p$.

We shall calculate the $a$-number of $X_m$ by  using 
the inductive structure of Fermat varieties
as employed in  \cite{Sa} and \cite{SK}.
For this we define the Fermat curve $C_{m}$  by the equation
$$
         x_{0}^{m} + x_{1}^{m} + x_{2}^{m} = 0\, .
$$
By  \cite{Sa} and \cite{SK} we have a rational map
$$
    \varphi :  C_{m} \times C_{m}  \longrightarrow X_{m}
$$
defined by 
$$
((x_0,x_1,x_2),(y_0,y_1,y_2))\mapsto (x_0y_2,x_1y_2,\epsilon x_2y_0,\epsilon x_2y_1)\, ,
$$
where $\epsilon$ is a fixed $2m$-th root of unity with $\epsilon^{m} = -1$.

The rational map $\varphi$ is not defined at the $m^2$ points where both
$z_2$ and $y_2$ vanish.
Let $Z_{m}$ be the surface obtained by blowing up 
$C_{m} \times C_{m}$ at these $m^{2}$ points. 

An element $\zeta$  of the group $\mu_{m}$ of $m$-th roots of unity 
acts on $C_{m} \times C_{m}$ via
$$
((x_{0}, x_{1}, x_{2}), (y_{0}, y_{1}, y_{2})) \mapsto 
((x_{0}, x_{1}, \zeta x_{2}), (y_{0}, y_{1}, \zeta y_{2}))\, .
$$
We set $G = \mu_{m}$.
The fixed point set of this action is equal to the locus of indeterminacy of 
$\varphi$  and this action naturally 
extends after blowing up to $Z_{m}$. We have the following diagram
\begin{displaymath}
\begin{xy}
\xymatrix{
Z_m \ar[r]^{\psi} \ar[d]^{\tilde{\varphi}}& C_m\times C_m \ar[d]^{\varphi} \\
Z_m/G \ar[r]^{\phi} & X_m
}
\end{xy}
\end{displaymath}
Here, the quotient surface $Z_{m}/G$ is nonsingular and
$\phi$ contracts $2m$ non-singular rational curves. For the details of
this construction we refer to \cite{Sa} or \cite{SK}. 
We derive a diagram in cohomology

\begin{displaymath}
\begin{xy}
\xymatrix{
{\rm H}^1(C_m,O_{C_m})\otimes {\rm H}^1(C_m,O_{C_m}) \ar[r] & {\rm H}^2(Z_m,O_{Z_m})  \\
{\rm H}^2(X_m,O_{X_m}) \ar[r] & \ar[u]^{\tilde{\varphi}^*}{\rm H}^2(Z_m/G,O_{Z_m/G}) 
}
\end{xy}
\end{displaymath}
where the horizontal arrows are isomorphisms and the vertical map identifies
the cohomology ${\rm H}^2(Z_m/G,O_{Z_m/G})$ 
with the invariants ${\rm H}^2(Z_m,O_{Z_m})^G$.
We conclude that ${\rm H}^2(X_m,O_{X_m})$ equals the invariant subspace 
$({\rm H}^{1}(C_{m}, O_{C_{m}})\otimes {\rm H}^{1}(C_{m}, O_{C_{m}}))^{G}$.

In order to calculate the action of $G = \mu_{m}$ on 
the cohomology group ${\rm H}^{1}(C_{m}, O_{C_{m}})$.
we consider the open  covering 
$ U_{i} = \{ (x_{0}, x_{1}, x_{2}) \in C_{m} \mid x_{i} \neq 0\}$
with $i \in \{0,1\}$.
The functions $t_{1} = x_{1}/x_{0}$ and $t_{2} = x_{2}/x_{0}$ 
define affine coordinates
on the curve $U_0$ given by 
$$
  1 + t_{1}^{m} + t_{2}^{m} = 0 \, .
$$
We represent elements of $H^{1}(C_{m}, O_{C_{m}})$ as \v{C}ech cocycles
with respect to the affine open covering $\{U_{0}, U_{1}\}$.
They are represented by regular functions on $U_{0}\cap U_{1}$. 
We set
$$
    \alpha_{a, b} = t_{2}^{b}/t_{1}^{a}\, .
$$
\begin{lemma}\label{boundary}
If $a$ is not positive or $a\geq b$  then $\alpha_{a, b}$ is cohomologous to zero.
\end{lemma}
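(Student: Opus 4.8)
The plan is to use that for the two-chart affine covering $\{U_0,U_1\}$ of $C_m$ a \v{C}ech $1$-cochain is simply a regular function on $U_0\cap U_1$, and that such a cochain is cohomologous to zero as soon as it extends to a regular function on one of the two charts: with the convention $\delta(g_0,g_1)=g_1-g_0$, a function $g\in\Gamma(U_0,\Ocal_{C_m})$ is the coboundary of $(-g,0)$ and a function $g\in\Gamma(U_1,\Ocal_{C_m})$ is the coboundary of $(0,g)$. Thus the lemma reduces to showing that, under each of its two hypotheses, $\alpha_{a,b}=t_2^{\,b}/t_1^{\,a}$ extends to a regular function over $U_0$ or over $U_1$. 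Here one should keep in mind that $\alpha_{a,b}$ only defines a cochain when $b\ge 0$, since $t_1$ is a unit on $U_0\cap U_1$ while $t_2$ vanishes there; this is implicit in the construction.

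First I would record the transition to the second chart. On $U_1$ one has coordinates $s_0=x_0/x_1=t_1^{-1}$ and $s_2=x_2/x_1=t_2/t_1$, with defining equation $s_0^m+1+s_2^m=0$, so that $t_2=s_2/s_0$ and hence
$$
\alpha_{a,b}=t_1^{-a}\,t_2^{\,b}=s_0^{\,a-b}\,s_2^{\,b}.
$$
The two cases then follow immediately. If $a\le 0$, then $-a\ge 0$ and $b\ge 0$, so $\alpha_{a,b}=t_1^{-a}t_2^{\,b}$ is a polynomial in $t_1$ and $t_2$, hence lies in $\Gamma(U_0,\Ocal_{C_m})=k[t_1,t_2]/(1+t_1^m+t_2^m)$ and is cohomologous to zero. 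If $a\ge b$, then $a-b\ge 0$ and $b\ge 0$, so $\alpha_{a,b}=s_0^{\,a-b}s_2^{\,b}$ lies in $\Gamma(U_1,\Ocal_{C_m})=k[s_0,s_2]/(s_0^m+1+s_2^m)$ and is again cohomologous to zero.

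I do not expect a genuine obstacle: the entire content is the chart computation above together with the elementary observation that over the affine intersection $U_0\cap U_1$ a cochain extending regularly over a single chart already bounds. The only points requiring care are fixing the sign convention for $\delta$ consistently and recording the tacit hypothesis $b\ge 0$. It is worth noting for the sequel that the two cases of the lemma are complementary, within $b\ge 0$, to the region $a\ge 1$, $a<b$, in which the $\alpha_{a,b}$ will later be shown to be nonzero; in this way Lemma \ref{boundary} is exactly what trims the obvious generating set of ${\H}^1(C_m,\Ocal_{C_m})$ down to the right size.
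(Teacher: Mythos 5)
Your argument is correct and is essentially the paper's own proof: in each case you exhibit $\alpha_{a,b}$ as a regular function on one of the two charts $U_0$ or $U_1$ and conclude it is a \v{C}ech coboundary for the two-set covering. You merely make explicit the coordinate change to $U_1$ (writing $\alpha_{a,b}=s_0^{a-b}s_2^{b}$) and the tacit hypothesis $b\geq 0$, which the paper leaves implicit.
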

\begin{proof}{If $a$ is non-positive, then $\alpha_{a, b}$ is 
regular on $U_{0}$ and  the co-cycle 
$\omega = (- \alpha_{a, b},  0) \in 
\Gamma(U_{0}, 0_{C_{m}}) \oplus \Gamma(U_{1}, 0_{C_{m}})$ gives
 $\delta (\omega) = \alpha_{a, b}$.
Similarly, $a \geq b$, then $\alpha_{a, b}$ is regular on $U_{1}$;
then the co-cycle $\omega = (0, \alpha_{a, b}) \in 
\Gamma(U_{0}, 0_{C_{m}}) \oplus \Gamma(U_{1}, 0_{C_{m}})$ gives $\delta (\omega) = \alpha_{a, b}$.
}
\end{proof}
We let
$$
\Xi = \{(a, b) \in {\ZZ}\times {\ZZ}:  1\leq a< b \leq m-1\} \, .
$$

\begin{proposition} A basis of ${\rm H}^1(C_m,O_{C_m})$ is given by the set of 
cocycles 
$$
\{ \alpha_{a,  b} = t_{2}^{b}/t_{1}^{a}:   (a, b) \in \Xi \} \, .
$$
\end{proposition}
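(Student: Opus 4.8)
The plan is to deduce the statement from two facts: that the listed cocycles span ${\rm H}^1(C_m,O_{C_m})$, and that their number equals $\dim_k {\rm H}^1(C_m,O_{C_m})$. For the dimension count, recall that $C_m$ is a smooth plane curve of degree $m$, hence has genus $g=(m-1)(m-2)/2$, so that $\dim_k {\rm H}^1(C_m,O_{C_m})=g$; on the other hand $\#\Xi=\sum_{b=2}^{m-1}(b-1)=\sum_{j=1}^{m-2}j=(m-1)(m-2)/2=g$. Thus it is enough to show that the classes $\alpha_{a,b}$ with $(a,b)\in\Xi$ span ${\rm H}^1(C_m,O_{C_m})$.

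To prove spanning, I would compute ${\rm H}^1(C_m,O_{C_m})$ by \v{C}ech cohomology for the affine cover $\{U_0,U_1\}$, that is, as the cokernel of $\Gamma(U_0,O_{C_m})\oplus\Gamma(U_1,O_{C_m})\to\Gamma(U_0\cap U_1,O_{C_m})$. On $U_0\cap U_1$ one has $\Gamma(U_0\cap U_1,O_{C_m})=k[t_1,t_1^{-1},t_2]/(1+t_1^m+t_2^m)$, and by repeatedly using the relation to replace $t_2^m$ by $-(1+t_1^m)$ every section can be written as a $k$-linear combination of the monomials $t_2^b/t_1^a=\alpha_{a,b}$ with $a\in{\ZZ}$ and $0\le b\le m-1$. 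Hence every class in ${\rm H}^1(C_m,O_{C_m})$ is a $k$-linear combination of the classes of such $\alpha_{a,b}$. By Lemma \ref{boundary}, $\alpha_{a,b}$ is cohomologous to zero whenever $a\le 0$ or $a\ge b$; the surviving monomials therefore have $1\le a<b\le m-1$, i.e.\ $(a,b)\in\Xi$. Consequently the $\alpha_{a,b}$ with $(a,b)\in\Xi$ span ${\rm H}^1(C_m,O_{C_m})$, and being $g$ in number they form a basis.

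There is no real obstacle in this argument; the only point worth a line of justification is that rewriting a section of $O_{C_m}$ over $U_0\cap U_1$ by means of the relation $1+t_1^m+t_2^m=0$ leaves its cohomology class unchanged, which is clear since it is the same section expressed differently. As an alternative that bypasses the genus computation, one can argue directly: exhibiting explicit $k$-bases of $\Gamma(U_0\cap U_1,O_{C_m})$, $\Gamma(U_0,O_{C_m})$ and $\Gamma(U_1,O_{C_m})$ in terms of the monomials $t_1^at_2^b$ with $0\le b\le m-1$ (describing $\Gamma(U_1,O_{C_m})$ via the coordinates $1/t_1$ and $t_2/t_1$), one checks that the images of $\{\alpha_{a,b}:(a,b)\in\Xi\}$ are precisely a basis of the quotient, yielding spanning and linear independence simultaneously.
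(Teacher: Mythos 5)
Your proof is correct and follows essentially the same route as the paper: both count $\#\Xi=(m-1)(m-2)/2=\dim {\rm H}^1(C_m,O_{C_m})$ and then prove spanning by writing a regular function on $U_0\cap U_1$ as a combination of monomials $t_2^d/t_1^c$, using the defining relation to reduce the $t_2$-exponent below $m$ and Lemma \ref{boundary} to discard the cases $c\le 0$ and $c\ge d$.
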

\begin{proof}{
Note that the cardinality of this set equals the dimension of 
${\rm H}^1(C_m,O_{C_m})$. 
So it suffices to show that these co-cycles generate this cohomology group. 
A regular function $f$ on $U_{0}\cap U_{1}$ can be considered as 
a rational function on $U_{0}$ with poles only at the points 
given by $t_{1} = 0$. So $f$ is a linear combination of the 
functions $\alpha_{c, d}$ with $c$ an integer and $d$ a non-negative 
integer. By Lemma \ref{boundary}
we can assume $c\geq 1$. If $d \geq m$, then
we have
$$
\alpha_{c, d} = t_{2}^{d -m}t_{2}^{m}/t_{1}^{c} 
                       = t_{2}^{d-m}(-1 -t_{1}^{m})/t_{1}^{c}
                      = -\alpha_{c, d-m} - \alpha_{c -m, d -m}
$$
showing that we can assume $d \leq m -1$. 
}
\end{proof}
The action of $\zeta \in G =\mu_{m}$ on ${\rm H}^{1}(C_{m}, O_{C_{m}})$ is 
obviously given by
$$
  \alpha_{a, b} \mapsto \zeta^{b} \alpha_{a, b}.
$$  
We write our prime as
$$
  p = d + nm \quad\hbox{\rm with $1\leq d \leq m-1$}.
$$
For a integer $\ell$, if $\ell \equiv k ~(\mbox{mod}~m)$ with $0\leq k \leq m-1$,
we set $\bar{\ell} = k$. Using this notation, we have $d = \bar{p}$.
Then, by Lemma \ref{boundary}
the Frobenius action on ${\rm H}^{1}(C_{m}, O_{C_{m}})$ is given by
$$
\begin{array}{cl}
 F^{*}\alpha_{a, b} & = t_{2}^{bp}/t_{1}^{ap} 
=t_{2}^{bd}(t_{2}^{m})^{nb}/t_{1}^{ad}t_{1}^{mna}\\
  & = t_{2}^{bd}(- t_{1}^{m} - 1)^{nb}/t_{1}^{ad}t_{1}^{man}\\
  & = (-1)^{nb}t_{2}^{bd}
(\sum_{i = 0}^{nb}{nb \choose i} t_{1}^{(i-an)m})/t_{1}^{ad}\\
\end{array}
$$
and this last expression is cohomologous to
$$ 
(-1)^{nb}t_{2}^{\overline{bd}}/t_{1}^{\overline{ad}} \, .
$$
We know by Lemma \ref{boundary}
that
$(-1)^{nb}t_{2}^{\overline{bd}}/t_{1}^{\overline{ad}}$
is zero in $H^{1}(C_{m}, O_{C_{m}})$ if and only if $\overline{bd} \leq \overline{ad}$.
Using this result (or by Koblitz \cite{Ko}), we get the following lemma.

\begin{lemma}\label{isoorzero}
The Frobenius map induces a $p$-th linear isomorphism on 
${\rm H}^{1}(C_{m}, O_{C_{m}})$ if and only if $p \equiv 1 ~({\mbox{mod}}~m)$.
The Frobenius map induces the zero map on ${\rm H}^{1}(C_{m}, O_{C_{m}})$
 if and only if $p \equiv - 1 ~({\mbox{mod}}~m)$. 
\end{lemma}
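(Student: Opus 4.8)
The plan is to read off the action of the Frobenius-semilinear map $F^{*}$ in the explicit basis $\{\alpha_{a,b}:(a,b)\in\Xi\}$ of ${\rm H}^{1}(C_{m},O_{C_{m}})$ obtained above, and then to translate the two assertions into elementary statements about the permutation ``multiplication by $d=\bar{p}$'' of the set $\{1,2,\ldots,m-1\}$. Recall that $F^{*}$ is automatically $p$-linear, so the content of the first assertion is just the bijectivity of $F^{*}$.

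First I would combine the formula derived above — that $F^{*}\alpha_{a,b}$ is cohomologous to $(-1)^{nb}\,t_{2}^{\overline{bd}}/t_{1}^{\overline{ad}}$ — with Lemma~\ref{boundary}. Since $\gcd(m,p)=1$ we have $\gcd(m,d)=1$, so $\sigma\colon x\mapsto\overline{xd}$ is a bijection of $\{1,\ldots,m-1\}$; in particular $\overline{ad}$ and $\overline{bd}$ always lie in $\{1,\ldots,m-1\}$, and $\overline{ad}=\overline{bd}$ is impossible when $a\neq b$. Hence for each $(a,b)\in\Xi$ the element $F^{*}\alpha_{a,b}$ is, up to a nonzero scalar, either $0$ (exactly when $\overline{ad}>\overline{bd}$) or the basis vector $\alpha_{\overline{ad},\overline{bd}}$ (exactly when $\overline{ad}<\overline{bd}$, in which case indeed $(\overline{ad},\overline{bd})\in\Xi$). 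Moreover, since $\sigma$ is injective, the surviving basis vectors are sent to pairwise distinct basis vectors. Thus $F^{*}$ carries some subset of the basis bijectively, up to scalars, onto a subset of the basis and annihilates the remaining basis vectors; in particular $F^{*}$ is bijective if and only if no basis vector is annihilated, and $F^{*}=0$ if and only if every basis vector is annihilated.

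From here both equivalences are purely combinatorial. No basis vector is annihilated precisely when $\overline{ad}<\overline{bd}$ for all $1\leq a<b\leq m-1$, i.e.\ when $\sigma$ is strictly increasing on $\{1,\ldots,m-1\}$; the unique strictly increasing self-bijection of a finite totally ordered set being the identity, this happens if and only if $d=\sigma(1)=1$, that is, $p\equiv1\pmod m$. (When $d=1$ indeed $F^{*}\alpha_{a,b}=(-1)^{nb}\alpha_{a,b}$ for all $(a,b)\in\Xi$, so $F^{*}$ is bijective.) Dually, every basis vector is annihilated precisely when $\overline{ad}>\overline{bd}$ for all $a<b$, i.e.\ when $\sigma$ is strictly decreasing; the unique such self-bijection being $x\mapsto m-x$, this happens if and only if $d=\sigma(1)=m-1$, that is, $p\equiv-1\pmod m$. (When $d=m-1$ we have $\sigma(x)=m-x$, so $\overline{ad}>\overline{bd}$ for all $a<b$ and $F^{*}=0$.)

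I do not expect a genuine obstacle. The only point requiring a little care is establishing the clean dichotomy for $F^{*}\alpha_{a,b}$ — that $\overline{ad},\overline{bd}\in\{1,\ldots,m-1\}$ and that $\overline{ad}\neq\overline{bd}$ when $a\neq b$ — which is exactly where the standing assumption $\gcd(m,p)=1$ is used. Everything else comes down to the elementary fact that a strictly monotone self-bijection of a finite totally ordered set is either the identity or the order reversal.
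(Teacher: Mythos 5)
Your proof is correct and follows essentially the same route as the paper: it combines the displayed formula $F^{*}\alpha_{a,b}\sim(-1)^{nb}t_{2}^{\overline{bd}}/t_{1}^{\overline{ad}}$ with Lemma~\ref{boundary} and the basis $\{\alpha_{a,b}\}_{(a,b)\in\Xi}$. The paper leaves the final combinatorial step implicit (citing Koblitz), whereas you spell it out cleanly via the observation that a strictly monotone self-bijection of $\{1,\dots,m-1\}$ must be the identity or $x\mapsto m-x$; this is a faithful completion, not a different approach.
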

\begin{remark}
By this lemma and Ibukiyama-Katsura-Oort \cite{IKO}, 
for $p \equiv -1~(\mbox{mod}~m)$
the Jacobian variety of $C_{m}$ is isomorphic to a product of supersingular
elliptic curves.
\end{remark}

\begin{theorem} Let $m$ be an integer $\geq 4$ and prime to $p$. Then the $a$-number of the Fermat surface $X_m$ is given by
$$
a(X_m)=\begin{cases}
0 & p\equiv 1 (\bmod \, m) \\
1 & p \equiv 2 \quad{or}\quad  2^{-1} \quad \hbox{in $({\ZZ}/m{\ZZ})^*$} \\
2 & otherwise \\
\end{cases}
$$
\end{theorem}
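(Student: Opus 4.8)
The plan is to transport the whole computation to $C_m\times C_m$ and then reduce it to an elementary fact about the permutation $\sigma_d\colon j\mapsto \overline{jd}$ of $\{1,\dots,m-1\}$, where $d=\bar p$ as above. Using the diagram relating $Z_m$, $C_m\times C_m$, $Z_m/G$ and $X_m$ (note $p\nmid m=|G|$, so taking $G$-invariants is exact), and arguing exactly as in the proof for a product of two curves, one identifies the Frobenius $F$ on ${\rm H}^2(X_m,\Ocal_{X_m})$, together with where its image sits in the Hodge filtration $F^0\supset F^1\supset F^2$ of ${\rm H}^2_{\rm dR}(X_m)$, with the corresponding data inside $({\rm H}^1_{\rm dR}(C_m)\otimes{\rm H}^1_{\rm dR}(C_m))^G$: the morphism $\phi$ contracts only rational curves and the blow-up $\psi$ introduces only classes lying in $F^1$, so neither affects the image of $F$ on the top cohomology of the structure sheaf nor its position relative to the filtration, while $F^2$ corresponds to $({\rm H}^0(\Omega^1_{C_m})\otimes{\rm H}^0(\Omega^1_{C_m}))^G$ and $F$ acts as $F\otimes F$. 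A basis of ${\rm H}^2(X_m,\Ocal_{X_m})$ is then given by the $G$-invariant tensors $\alpha_{a,b}\otimes\alpha_{a',m-b}$ with $2\le b\le m-2$, $1\le a\le b-1$ and $1\le a'\le m-b-1$.

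What one needs about a basis cocycle $\alpha_{a,b}$ is only whether $F(\alpha_{a,b})$ lands in $F^1={\rm H}^0(\Omega^1_{C_m})$ inside ${\rm H}^1_{\rm dR}(C_m)$, and this is answered by the formula $F^{*}\alpha_{a,b}\equiv (-1)^{nb}\alpha_{\overline{ad},\overline{bd}}$ together with Lemma~\ref{boundary}; moreover $F(\alpha_{a,b})\neq 0$ always, by Katz's injectivity. Call $(a,b)$ \emph{non-ordinary} if $\overline{ad}>\overline{bd}$, so that then $F(\alpha_{a,b})$ is a non-zero element of ${\rm H}^0(\Omega^1_{C_m})$, and \emph{ordinary} if $\overline{ad}<\overline{bd}$. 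Since $F(\alpha_{a,b}\otimes\alpha_{a',b'})=F(\alpha_{a,b})\otimes F(\alpha_{a',b'})$, the $G$-invariant basis vector $\alpha_{a,b}\otimes\alpha_{a',m-b}$ is carried by $F$ into $F^2\setminus(0)$ precisely when both $(a,b)$ and $(a',m-b)$ are non-ordinary, into $F^1\setminus F^2$ when exactly one is, and to a vector with non-zero image in $F^0/F^1$ when both are ordinary; using that $F$ is injective on ${\rm H}^1(\Ocal_{C_m})$, the same trichotomy holds for arbitrary (not necessarily basis) $G$-invariant vectors. Consequently $a(X_m)=2$ if and only if there is an index $b$ with $2\le b\le m-2$ such that $(a,b)$ is non-ordinary for some $a<b$ and $(a',m-b)$ is non-ordinary for some $a'<m-b$; $a(X_m)=0$ if and only if $(a,b)$ is ordinary for all $1\le a<b\le m-2$; and $a(X_m)=1$ otherwise.

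It remains to read these conditions off from $d$. The condition for $a(X_m)=0$ says $\overline{d}<\overline{2d}<\dots<\overline{(m-2)d}$, which forces $(m-2)d<m$ and hence $d=1$; so $a(X_m)=0$ exactly when $p\equiv1\ (\mathrm{mod}\ m)$. For $a(X_m)=2$ one first notes the symmetry that $(i,j)$ with $i<j$ and $\overline{id}>\overline{jd}$ corresponds to $(m-j,m-i)$ with $m-j<m-i$ and $\overline{(m-j)d}>\overline{(m-i)d}$; this turns the condition into the requirement that $\sigma_d$ have an index which is simultaneously the larger and the smaller member of such pairs, i.e.\ that the sequence $\overline{d},\overline{2d},\dots,\overline{(m-1)d}$ contain a strictly decreasing subsequence of length~$3$. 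One then checks that $\sigma_d$ contains no such subsequence exactly for $d\in\{1,\,2,\,2^{-1}\bmod m\}$: $\sigma_1$ is the identity, $\sigma_2$ (for $m$ odd) is a concatenation of two increasing runs, and the property is preserved under $d\mapsto d^{-1}$ since $\sigma_{d^{-1}}=\sigma_d^{-1}$; conversely, for every other $d$ one exhibits a length-$3$ decreasing subsequence, using the decomposition of $\overline{d},\overline{2d},\dots$ into its $d$ increasing runs --- when $d<m/2$ (so there are at least three runs) one takes the top of the first run, the second entry of the second run and the bottom of the third run, and when $d>m/2$ one passes to the complement $\sigma_{m-d}$ and finds an increasing triple there (from the initial run, or from $\overline{d'},2\overline{d'}$ together with the value $m-1$). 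Combining this with the case $a=0$ gives $a(X_m)=0$ for $p\equiv1$, $a(X_m)=1$ for $p\equiv 2$ or $2^{-1}$, and $a(X_m)=2$ otherwise.

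The identifications in the first two paragraphs are routine given the inductive structure of Fermat varieties recalled earlier and the argument already carried out for a product of two curves, and the value $a=0$ is immediate. The real obstacle is the last, purely combinatorial, statement: that $j\mapsto\overline{jd}$ is free of length-$3$ decreasing subsequences precisely for $d=1,2,2^{-1}$. Its harder direction --- producing such a subsequence for every other $d$ --- requires a short case analysis according to the size of $d$ relative to $m/2$, $2m/3$ and to $\overline{d^{-1}}$, the boundary cases $d\equiv 2,2^{-1}$ being exactly the ones where no decreasing triple exists.
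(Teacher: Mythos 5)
Your geometric reduction is essentially the paper's: transport everything to $(\H^1_{\rm dR}(C_m)\otimes\H^1_{\rm dR}(C_m))^G$, take the basis $\alpha_{a,b}\otimes\alpha_{a',m-b}$, and test whether $F\otimes F$ sends some invariant class into $F^1$ resp.\ $F^2$. Where you genuinely diverge is in the combinatorial core. The paper tests non-emptiness of the explicit set $Y(m,d)$ by exhibiting elements case by case in terms of $\ell=[m/d]$; you instead use the symmetry $(i,j)\mapsto(m-j,m-i)$ (which sends non-ordinary pairs to non-ordinary pairs) to recast the condition $a(X_m)=2$ as the existence of a strictly decreasing subsequence of length $3$ in $\overline{d},\overline{2d},\dots,\overline{(m-1)d}$. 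This is a real improvement: it makes the exceptional role of $d=2$ (the sequence is a concatenation of two increasing runs, each of which can contribute at most one term to a decreasing subsequence) and of $d=2^{-1}$ (inverse permutations have the same longest decreasing subsequence) conceptually transparent, where the paper handles these by separate ad hoc computations. I checked your sketched constructions for the harder direction and they do close: for $3\le d<m/2$, writing $m=\lfloor m/d\rfloor d+r$, the top of run $0$ has value $m-r$, the second entry of run $1$ has value $2d-r$, and the bottom of run $2$ has value at most $d-1$, so the triple is decreasing and correctly ordered in position; for $d>m/2$ one passes to $d'=m-d$ and finds an increasing triple either inside the first run (when $\lfloor m/d'\rfloor\ge 3$, or $d'\le 2$ with $m\ge 7$) or as $d',\,2d',\,m-1$, and the latter fails precisely when $2d'=m-1$, i.e.\ $d=2^{-1}$ --- which is exactly the excluded case.

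Two caveats. First, you explicitly leave this combinatorial verification as a sketch; the inequalities above, and in particular the identification of $d'=(m-1)/2$ as the unique failure in the range $m/3<d'<m/2$, must actually be written out for the proof to be complete (the paper's own case analysis is no less terse, but that is the standard you are being held to). Second, your passage from basis vectors to arbitrary $G$-invariant vectors (``the same trichotomy holds for arbitrary \dots vectors'') needs slightly more than injectivity of $F$ on $\H^1(C_m,\Ocal_{C_m})$: one must use that the Hasse--Witt matrix is monomial in the basis $\{\alpha_{a,b}\}$ (each $\alpha_{a,b}$ maps to a scalar multiple of a single $\alpha_{\overline{ad},\overline{bd}}$ or into $F^1_C$), so that the images of distinct basis tensors in ${\rm gr}^0$ and ${\rm gr}^1$ are linearly independent and no cancellation among them can push a combination deeper into the filtration. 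With that observation supplied, your argument is correct.
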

\begin{proof}
Write $C=C_m$ and $X=X_m$ and let
$$
{\rm H}^{1}_{\rm dR}(C)= F_{C}^{0} \supset F_{C}^{1} \supset \{0\}
$$
be the Hodge filtration of ${\rm H}^{1}_{\rm dR}(C)$.
Since the image of ${\rm H}^2(X,O_X)$ lands in 
the $G$-invariant part of
${\rm H}_{\rm dR}^1(C)
\otimes {\rm H}^1_{\rm dR}(C)$ in ${\rm H}^2_{\rm dR}(X)$ 
we consider the Hodge filtration on 
$({\rm H}_{\rm dR}^1(C)\otimes {\rm H}^1_{\rm dR}(C))^G$ 
$$
({\rm H}_{\rm dR}^1(C)
\otimes {\rm H}^1_{\rm dR}(C))^G=F^{0}\supset F^{1}\supset F^{2}\supset \{0\}
$$
Then we have
$$
F^{1} =  \{F_{C}^{1}\otimes F^{0} + F^{0}\otimes F_{C}^{1}\}^{G}
\quad \hbox{\rm and} \quad
F^{2} = \{F_{C}^{1}\otimes F_{C}^{1}\}^{G}\, .
$$
Thus by Lemma \ref{isoorzero} we see that
$a(X_{m}) = 0$ if and only if $p \equiv 1 (\bmod \, m)$. Moreover,
if $p\equiv -1 (\bmod \, m)$ then $a(X_{m}) = 2$.
From here on we assume $p\not\equiv \pm 1 (\bmod \, m)$, 
i.e.,  $2 \leq d \leq m - 2$. In this case either $a(X_m)=1$ or $a(X_m)=2$.
We look for a $G$-invariant element 
$$
\gamma=\gamma_{a,a',b} = \alpha_{a,b}\otimes \alpha_{a',m-b}
$$ 
with $(a, b)$ and $(a', m-b)$ in $\Xi$ such that
$F^{*}\alpha_{a,b} = 0$, $F^{*}\alpha_{a',m-b} = 0$ in 
${\rm H}^{1}(C_{m}, O_{C_{m}})$
If such a $\gamma$ exists, 
then the Frobenius image of $\gamma$ in ${\rm H}_{dR}^{2}(X_{m})$ is contained in
$F^{2}$ and so we have $a(X_{m}) = 2$. If such an element doesn't exist, 
then we see $a(X_{m}) = 1$.
Hence we are reduced to the following combinatorial
problem: $a(X_m)=2$ if and only if the set $Y(m,d)$
$$
\{ (a,a',b): 1\leq a, a', b \leq m-1;  a<b, a'<m-b, 
\overline{da}\geq \overline{db}, \overline{da'}\geq \overline{d(m-b}\} 
$$
is not empty.

We first deal with the case $d=2$. This implies that $m$ is odd.
 If $a<b$ and $\overline{2b}\leq 
\overline{2a}$ then we have $(m+1)/2 \leq b \leq (m-1-a)/2$. Similarly, if
$a'<m-b$ and $\overline{2(m-b)}\leq \overline{2a'}$ then 
$(m+1)/2 \leq m-b \leq (m-1-a')/2$; but then $m=b+(m-b)\geq m+1$, a contradiction. Thus $a(X_m)=2$ in this case.

We set $\ell=[m/d]$. We shall assume that $d>2$. If $\ell>3$ or $\ell=2$ 
then the element $(\ell,[(d-2)m/d],\ell+2)$ is in $Y(m,d)$ or (if $\ell=2$
and $r<d/2$) the element $(2,[(d-2)m/d],3)$ is in $Y(m,d)$. 

Suppose now that $\ell=1$ and $m\neq 2d-1$. 
Since ${\rm gcd}(d,m)=1$ we have integers $x,y$ with $y>0$ 
such that  $xd+ym=1$. Put $b=\overline{1-x}$. We then can write 
$bd=d-1+ym$. Then the element $(b-1,[(d-y-1)m/d],b)$ is in $Y(m,d)$ as 
the reader may check.  Finally, if $p\equiv 2^{-1} (\bmod \, m)$, i.e., 
$m=2d-1$, consider the interval $[i\ell,(i+1)\ell]$. It contains two integers, say $b,b+1$. Suppose we have an $a$ such that 
$1\leq a < b$ with $\overline{da}\geq \overline{db}$. Then $m-b$ is the larger
integer in the interval
$[j\ell,(j+1)\ell]$. Therefore if $a'<m-b$ we will have
$\overline{da'}<\overline{d(m-b)}$ and the set $Y(m,d)$ is empty.
\end{proof}

The reader may check that $Y(m,d)$ and $Y(m,d')$ for $dd'=1 (\bmod \, m)$ 
have the same cardinality; this shortens the proof.

\begin{corollary} For the Fermat surface $X_m$ in characteristic $p$
(with $m\geq 4$, ${\rm gcd}(m,p)=1$) the height $h(X_m)$
equals $1$ if and only if $p\equiv  1 \, (\bmod \, m)$. Furthermore, 
If $p\not\equiv 1, 2, 2^{-1} \, (\bmod \, m)$  we have  $h(X_m) = \infty$.
\end{corollary}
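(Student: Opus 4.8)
The plan is to derive the corollary by feeding the computation of $a(X_m)$ just obtained into three results established earlier: the identity $h(X)=b(X)+p_g(X)$, the dichotomy that $b(X)<\infty$ forces $a(X)\in\{0,1\}$, and the equivalence $a(X)=0\iff b(X)=0$. So I would proceed in two stages: first check that $X=X_m$ meets the hypotheses under which those results were established, and then chase implications.

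For the first stage: since $X_m\subset {\bf P}^3$ is a smooth hypersurface we have $\H^1(X_m,\Ocal_{X_m})=0$, so by Artin--Mazur the functor $\Phi_{X_m}=\Phi^{(2)}_{X_m}$ is pro-representable by a smooth formal Lie group; hence $h(X_m)$ is defined and $h(X_m)=b(X_m)+p_g(X_m)$. I also need the Hodge-to-de Rham spectral sequence of $X_m$ to degenerate at $E_1$: this is invoked in the dichotomy theorem, in the proposition $a=0\iff b=0$, and already in the computation of $a(X_m)$ itself. This degeneration holds for Fermat surfaces with ${\rm gcd}(m,p)=1$; I would quote it, or deduce it from liftability of $X_m$ to $W_2(k)$ together with Deligne--Illusie when $p>2$, treating $p=2$ (so $m$ odd) by the explicit \v{C}ech model of the cohomology already in place. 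Pinning this down uniformly in $p$ --- in particular in characteristic $2$ --- is the step I expect to be the main obstacle; the rest is formal.

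Granting this, the infinite-height assertion is immediate. If $p\not\equiv 1,2,2^{-1} \, (\bmod \, m)$ then $a(X_m)=2$ by the theorem; were $b(X_m)<\infty$, this would contradict the dichotomy, so $b(X_m)=\infty$ and therefore $h(X_m)=b(X_m)+p_g(X_m)=\infty$.

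For the remaining assertion I would use that $p_g(X_m)=\binom{m-1}{3}$, computed from $K_{X_m}\cong\Ocal_{X_m}(m-4)$; then $h(X_m)=b(X_m)+p_g(X_m)$ attains its minimal value $p_g(X_m)$ --- which is $1$ precisely for the quartic $m=4$ --- exactly when $b(X_m)=0$. By the proposition $b(X_m)=0\iff a(X_m)=0$, and by the theorem $a(X_m)=0\iff p\equiv 1 \, (\bmod \, m)$. Concatenating these equivalences gives the statement on $h(X_m)$; in particular $h(X_4)=1$ if and only if $p\equiv 1 \, (\bmod \, 4)$.
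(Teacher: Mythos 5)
The paper states this corollary without proof; the intended argument is precisely the one you give --- feed the computation of $a(X_m)$ into the identity $h=b+p_g$, the dichotomy $b<\infty\Rightarrow a\in\{0,1\}$, and the equivalence $a=0\Leftrightarrow b=0$ --- so your route is the right one, and your derivation of the second assertion ($p\not\equiv 1,2,2^{-1}\Rightarrow a(X_m)=2\Rightarrow b(X_m)=\infty\Rightarrow h(X_m)=\infty$) is complete. Your preliminary verifications are also sound, and less of an obstacle than you fear: pro-representability does follow from ${\H}^1(X_m,{\Ocal}_{X_m})=0$, and Hodge-to-de Rham degeneration for a smooth hypersurface in ${\bf P}^3$ holds in every characteristic (including $p=2$) because the Hodge and de Rham numbers of smooth complete intersections are the same as in characteristic $0$; no appeal to Deligne--Illusie or to the \v{C}ech model is needed.

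The substantive point concerns the first assertion, where your bookkeeping is more careful than the corollary itself. Since $h(X_m)=b(X_m)+p_g(X_m)\ge p_g(X_m)=\binom{m-1}{3}$, the value $1$ is attainable only for $m=4$. What the chain of equivalences actually yields is: $h(X_m)$ equals its minimal value $p_g(X_m)=\binom{m-1}{3}$ (equivalently, $b(X_m)=0$) if and only if $p\equiv 1 \, (\bmod \, m)$; this specializes to ``$h=1$ iff $p\equiv 1$'' only for the quartic. You noticed this and stated the correct conclusion for $m=4$, but you should say explicitly that for $m\ge 5$ the assertion as literally written fails in the ``if'' direction (for $p\equiv 1 \, (\bmod\, m)$ one gets $h(X_m)=\binom{m-1}{3}>1$, not $1$) and must be read in the corrected form above. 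This is a defect of the statement, not a gap in your proof.
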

\end{section}
%%%%%%%%%%%%%%%%%%%
%%%%%%%%%%%%%%%%%%%%
\noindent
{\bf Acknowledgement} The second author was supported in part by JSPS Grant-in-Aid (S), No 19104001.

\end{document}